\newtheorem{theorem}{Theorem}[section]
\newtheorem{lemma}[theorem]{Lemma}
\newtheorem{prop}[theorem]{Proposition}
\theoremstyle{definition}
\newtheorem{definition}[theorem]{Definition}
\theoremstyle{remark}
\theoremstyle{corollary}
\numberwithin{equation}{section}
\newcommand{\Bfamfive}{\{B^{5,l}\}_{l \geq 1}}
\newcommand{\veps}{\varepsilon}
\newcommand{\vphi}{\varphi}
\newcommand{\mZ}{\mathbb{Z}}
\def\ge{\mathfrak{g}}
\def\te{\mathfrak{t}}
\def\al{\alpha}
\def\cd{\cdots}
\def\cV{\mathcal{V}}
\def\cB{\mathcal{B}}
\def\C{\mathbb{C}}
\def\Z{\mathbb{Z}}
\def\la{\lambda}
\def\L{\Lambda}
\def\e{\tilde{e}}
\def\f{\tilde{f}}
\begin{document}

\title[$D_5^{(1)}$- Geometric Crystal and its ultra-discretization]{$D_5^{(1)}$- Geometric Crystal corresponding to the\\ Dynkin spin node $i=5$ and its ultra-discretization}


\author{Mana Igarashi}
\address{Department of Mathematics, 
Sophia University, Kioicho 7-1, Chiyoda-ku, Tokyo 102-8554,
Japan}
\email{mana-i@sophia.ac.jp}

\author{Kailash C. Misra}
\address{Department of Mathematics,
North Carolina State University, Raleigh, NC 27695-8205, USA}
\email{misra@ncsu.edu}
\thanks{KCM is partially supported by the Simons Foundation Grant \#307555.}

\author{Suchada Pongprasert}
\address{Department of Mathematics,
North Carolina State University, Raleigh, NC 27695-8205, USA}
\email{spathom@ncsu.edu }


\date{}

\begin{abstract}
Let $\ge$ be an affine Lie algebra with index set $I = \{0, 1, 2, \cdots , n\}$ and $\ge^L$ be its Langlands dual. It is conjectured that  for each Dynkin node $i \in I \setminus \{0\}$ the affine Lie algebra $\ge$ has a positive geometric crystal whose ultra-discretization is isomorphic to the limit of certain coherent family of perfect crystals for $\ge^L$. In this paper we construct a positive geometric crystal $\cV(D_5^{(1)})$ in the level zero fundamental spin $D_5^{(1)}$- module $W(\varpi_5)$. Then we define explicit $0$-action on the level $\ell$  known $D_5^{(1)}$- perfect crystal $B^{5, l}$ and show that $\{B^{5, l}\}_{l \geq 1}$ is a coherent family of perfect crystals with limit $B^{5, \infty}$. Finally we show that the ultra-discretization of  $\cV(D_5^{(1)})$ is isomorphic to $B^{5, \infty}$ as crystals which proves the conjecture in this case.
\end{abstract}

\maketitle

\section{Introduction}
Let $\ge$ be an affine Lie algebra \cite{Kac}  with Cartan datum $\{A, \Pi, \check{\Pi}, P, \check{P}\}$ and index set $I = \{0,1, \cdots , n\}$ where $A= (a_{ij})_{i,j \in I}$ is the affine GCM, $\Pi = \{\alpha_i \mid i \in I\}$ is the set of simple roots, $\check{\Pi} = \{\check{\alpha_i} \mid i \in I\}$ is the set of simple coroots, $P$ and $\check{P}$ are the weight lattice, and coweight lattice respectively. Let $\te = \C \otimes_{\mZ}\check{P}$, $\bf c$,  $\delta$, and  $\{\L_i \mid i\in I\}$ denote the Cartan subalgebra, the canonical central element, the null root and the set of fundamental weights respectively. Note that $\alpha_j(\check{\alpha}_i) = a_{ij}$ and $\L_j(\check{\alpha_i}) = \delta_{ij}$ and $\te = \text{span}_{\C}\{\check{\alpha}_i, d \mid i \in I\}$ where $d$ is a degree derivation. Then $P = \oplus_{j\in I}\mZ\L_j \oplus \mZ\delta \subset \te^*$, $\check{P} = \oplus_{i\in I}\mZ\check{\alpha}_i \oplus \mZ d \subset \te$ and $P_{cl} = P/\mZ\delta$ is called the classical weight lattice. The set $P^+ = \{\lambda \in P\mid \lambda (\check{\alpha}_i) \in \mZ_{\geq 0} \; \text{for} \; \text{all} \; i \in I\}$ (resp. $P_{cl}^+ = \{\lambda \in P_{cl} \mid \lambda (\check{\alpha}_i) \in \mZ_{\geq 0} \; \text{for} \; \text{all} \; i \in I\}$) is called the set of (affine) dominant (resp. classical dominant) weights and we say that $\lambda \in P^+$ or $P_{cl}^+$ has level $l = \lambda ({\bf c})$. We denote $(P^+)_l$ (resp. $(P_{cl}^+)_l$) to be the set of affine (resp. classical) dominant weights of level $l$. We denote $\te_{cl}^* = \te^*/{{\mathbb C}\delta}$ and 
$(\te_{cl}^*)_0 = \{\la \in \te_{cl}^* \mid \langle {\bf c}, \la\rangle = 0\}$. We also denote $\ge_i$ to be the subalgebra of $\ge$ with index set  $I_i = I \setminus \{i\}$ which is a finite dimensional semisimple  Lie algebra. The Weyl group $W$ of $\ge$ is generated by the simple reflections $\{s_i \mid i \in I\}$. The sets $\Delta$, $\Delta_+$ ,  $\Delta^{re}:=\{w(\al_i)|w\in W,\,\,i\in I\}$ and $\Delta^{re}_+ = \Delta_+ \cap \Delta^{re}$ are called the set of roots, postive roots, real roots and positive real roots respectively. In this paper we will assume $\ge$ to be simply laced which implies that the affine GCM $A = (a_{ij})_{i,j \in I}$ is symmetric.

Let $\ge'$ be the derived Lie algebra of $\ge$ and let  $G$ be the Kac-Moody group associated  with $\ge'$(\cite{KP, PK}).
Let $U_{\alpha}:=\exp\ge_{\alpha}$ $(\alpha\in \Delta^{re})$ be the one-parameter subgroup of $G$. The group $G$ is generated by 
$U_{\alpha}$ $(\alpha\in \Delta^{re})$. Let $U^{\pm}:=\langle U_{\pm\alpha}|\alpha\in\Delta^{re}_+\rangle$ be the subgroup generated by $U_{\pm\alpha}$
($\al\in \Delta^{re}_+$).
For any $i\in I$, there exists a unique homomorphism;
$\phi_i:SL_2(\C)\rightarrow G$ such that
\[
\hspace{-2pt}\phi_i\left(
\left(
\begin{array}{cc}
c&0\\
0&c^{-1}
\end{array}
\right)\right)=c^{\check{\alpha}_i},\,
\phi_i\left(
\left(
\begin{array}{cc}
1&t\\
0&1
\end{array}
\right)\right)=\exp(t e_i),\,
 \phi_i\left(
\left(
\begin{array}{cc}
1&0\\
t&1
\end{array}
\right)\right)=\exp(t f_i).
\]
where $c\in\C^\times$ and $t\in\C$.
Set $\check{\alpha}_i(c):=c^{\check{\alpha}_i}$,
$x_i(t):=\exp{(t e_i)}$, $y_i(t):=\exp{(t f_i)}$, 
$G_i:=\phi_i(SL_2(\C))$,
$H_i:=\phi_i(\{{\rm diag}(c,c^{-1})\mid 
c\in\C \setminus \{0\}\})$. 
Let $H$  be the subgroup of $G$ generated by $H_i$'s
with the Lie algebra $\te$. 
Then $H$ is called a  maximal torus in $G$, and 
$B^{\pm}=U^{\pm}H$ are the Borel subgroups of $G$.
The element $\bar{s}_i:=x_i(-1)y_i(1)x_i(-1) \in N_G(H)$  is a representative of 
$s_i\in W=N_G(H)/H$. 

The geometric crystal for the simply laced affine Lie algebra $\ge$ is defined as follows.
\begin{definition}\label{geometric}(\cite{BK},\cite{N}) 
The geometric crystal for the simply laced affine Lie algebra $\ge$ is a quadruple $\cV(\ge)=(X, \{e_i\}_{i \in I}, \{\gamma_i\}_{i \in I},$ 
$\{\veps_i\}_{i\in I})$, 
where $X$ is an ind-variety,  $e_i:\C^\times\times
X\longrightarrow X$ $((c,x)\mapsto e^c_i(x))$
are rational $\C^\times$-actions and  
$\gamma_i,\veps_i:X\longrightarrow 
\C$ $(i\in I)$ are rational functions satisfying the following:
\begin{enumerate}
\item $\{1\}\times X\subset {\rm dom}(e_i) \;
{\rm for} \; {\rm any} \; i\in I.$\\
\item $\gamma_j(e^c_i(x))=c^{a_{ij}}\gamma_j(x).$\\
\item $ \{e_i\}_{i\in I} \; {\rm satisfy \; the \; following \; relations}:\\
 \begin{array}{lll}
&\hspace{-20pt} \quad e^{c_1}_{i}e^{c_2}_{j}
=e^{c_2}_{j}e^{c_1}_{i}&
{\rm if }\,\,a_{ij}=a_{ji}=0,\\
&\hspace{-20pt} \quad e^{c_1}_{i}e^{c_1c_2}_{j}e^{c_2}_{i}
=e^{c_2}_{j}e^{c_1c_2}_{i}e^{c_1}_{j}&
{\rm if }\,\,a_{ij}=a_{ji}=-1,\\
\end{array}$\\
\item $\veps_i(e_i^c(x))=c^{-1}\veps_i(x)$ and $\veps_i(e_j^c(x))=\veps_i(x) \qquad {\rm if }\,
a_{i,j}=a_{j,i}=0.$
\end{enumerate}
\end{definition}
For fixed $i \in I$, let $G^i$ be the reductive algebraic group with Lie algebra $\ge_i$ and $B^i$, $W^i$ be its Borel subgroup, Weyl group respectively.
We consider the flag variety $X^i:=G^i/{B^i}$. For $w \in W^i$, the Schubert cell $X^i_w$ associated with $w$ has a natural $\ge_i$-geometric crystal structure \cite{BK, N}. Let $w=s_{i_1} s_{i_2} \cdots s_{i_l}$ be a reduced expression. For ${\bf i}:=(i_1, i_2, \cdots ,i_l)$, set 
\begin{equation*}
B_{\bf i}^-
:=\{Y_{\bf i}(c_1, c_2, \cdots ,c_l)
:=Y_{i_1}(c_1) Y_{i_2}(c_2)\cdots Y_{i_l}(c_l)
\,\vert\, c_1, c_2, \cdots ,c_l\in\C^\times\}\subset B^-,
\label{bw1}
\end{equation*}
where $Y_j(c):=y_j(\frac{1}{c}){\check\al}_j(c) = y_j(\frac{1}{c})c^{{\check\al}_j}$. Then  we have the following result.
\begin{theorem}\label{schubert}\cite{BK, N}
The set $B_{\bf i}^-$ with the explicit actions of \; $e^c_k$, $\veps_k$, and $\gamma_k$ , for $k \in I_i, c \in \C^\times$ given by:
\begin{eqnarray}
&& e_k^c(Y_{\bf i}(c_1,\cdots,c_l))
=Y_{\bf i}({\mathcal C}_1,\cdots,{\mathcal C}_l)), \notag\\
&&\text{where} \notag\\
&&{\mathcal C}_j:=
c_j\cdot \frac{\displaystyle \sum_{1\leq m\leq j,i_m=k}
 \frac{c}
{c_1^{a_{i_1,k}}\cdots c_{m-1}^{a_{i_{m-1},k}}c_m}
+\sum_{j< m\leq k,i_m=k} \frac{1}
{c_1^{a_{i_1,k}}\cdots c_{m-1}^{a_{i_{m-1},k}}c_m}}
{\displaystyle\sum_{1\leq m<j,i_m=k} 
 \frac{c}{c_1^{a_{i_1,k}}\cdots c_{m-1}^{a_{i_{m-1},k}}c_m}+ 
\mathop\sum_{j\leq m\leq k,i_m=k}  \frac{1}
{c_1^{a_{i_1,k}}\cdots c_{m-1}^{a_{i_{m-1},k}}c_m}},\\
&& \veps_k(Y_{\bf i}(c_1,\cd,c_l))=
\sum_{1\leq m\leq l,i_m=k} \frac{1}
{c_1^{a_{i_1,k}}\cdots c_{m-1}^{a_{i_{m-1},k}}c_m},\\
&&\gamma_k(Y_{\bf i}(c_1,\cdots,c_l))
=c_1^{a_{i_1,k}}\cdots c_l^{a_{i_l,k}},
\end{eqnarray}
is a geometric crystal isomorphic to $X^i_w$.
\end{theorem}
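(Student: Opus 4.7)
The plan is to first establish a birational parametrization of the Schubert cell $X^i_w$ by $(\C^\times)^l$ via the map $(c_1,\ldots,c_l) \mapsto Y_{\bf i}(c_1,\ldots,c_l)B^i$, and then to transport the natural $\ge_i$-geometric crystal structure on $X^i_w$ (as defined in \cite{BK, N}) to $B_{\bf i}^-$ and verify that the transported operators coincide with the stated formulas.

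First, I would recall that on $X^i_w$ the action $e_k^c$ is the geometric lift of the Kashiwara operator: given a representative $u \in U^-$ for a point $x \in X^i_w$, one writes $u = y_k(a)\, u'$ in a suitable normal form where $u'$ is independent of the chosen $y_k$-coordinate, and defines $e_k^c(u) = y_k(a/c)\,c^{\check{\al}_k}\, u'$ after absorbing the resulting Cartan factor. The functions $\veps_k$ and $\gamma_k$ are obtained as the total $y_k$-coefficient and the $\check{\al}_k$-component of the associated torus element respectively. The task is to make these definitions explicit in the coordinates $(c_1,\ldots,c_l)$.

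Second, I would carry out the computation by iteratively moving an $x_k(t)$ through the product $Y_{i_1}(c_1)\cd Y_{i_l}(c_l)$ using the $SL_2$-identity
\[
x_k(t)\,y_k(s) = y_k\!\left(\tfrac{s}{1+st}\right)\check{\al}_k(1+st)\, x_k\!\left(\tfrac{t}{1+st}\right),
\]
together with $x_k(t)\,y_j(s)=y_j(s)\,x_k(t)$ when $a_{kj}=0$, and the corresponding $SL_3$ braid identity when $a_{kj}=-1$. Commuting $x_k(t)$ past each $Y_{i_m}(c_m) = y_{i_m}(1/c_m)\,c_m^{\check{\al}_{i_m}}$ produces a transformation of $c_m$ controlled by the cumulative weight factors $c_1^{a_{i_1,k}}\cd c_{m-1}^{a_{i_{m-1},k}}$, and collecting all contributions and re-absorbing the accumulated Cartan piece yields the rational functions $\mathcal{C}_j$ in the stated single-quotient form. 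The formulas for $\veps_k$ and $\gamma_k$ fall out of the same reduction: $\veps_k$ is the total $y_k$-coefficient generated by the commutations, and $\gamma_k$ reads off the $\check{\al}_k$-eigenvalue on $\prod_m c_m^{\check{\al}_{i_m}}$, which pairs with $\al_k$ via $a_{i_m,k}$.

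Third, the geometric crystal axioms (1)--(4) of Definition \ref{geometric} can then be checked directly: (1) is transparent from the formulas, (2) and (4) follow from one-line verifications with the exponents, and the commutation and braid relations in (3) reduce to the $SL_2$- and $SL_3$-identities already used in the bookkeeping. The main obstacle will be the combinatorial bookkeeping in the inductive commutation step: at each $m$ one must track simultaneously the running $y_k$-coefficient and the cumulative torus contribution, and verify that the apparently asymmetric role played by the index $j$ (partitioning the numerator and denominator sums into $c$-weighted and unweighted halves) is the correct outcome of the rewriting. Once this stabilizes into the displayed formula, the isomorphism with $X^i_w$ follows automatically because the parametrizing map $Y_{\bf i}(c_1,\ldots,c_l) \mapsto Y_{\bf i}(c_1,\ldots,c_l)B^i$ intertwines the transported and intrinsic geometric crystal structures.
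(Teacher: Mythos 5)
The paper offers no proof of this theorem --- it is quoted from \cite{BK, N} with only a citation --- so there is nothing internal to compare against; your outline reproduces the standard argument of those references: parametrize the cell by $(\C^\times)^l$ via $Y_{\bf i}$, transport the intrinsic $\ge_i$-geometric crystal structure, and extract the induced action by commuting $x_k(t)$ leftward through $Y_{i_1}(c_1)\cdots Y_{i_l}(c_l)$ with the $SL_2$ identity, the accumulated torus factors $c_1^{a_{i_1,k}}\cdots c_{m-1}^{a_{i_{m-1},k}}$ producing exactly the weights appearing in ${\mathcal C}_j$, $\veps_k$, and $\gamma_k$. One caveat: in \cite{BK} the operator $e_k^c$ on $B^-$ is defined by a two-sided multiplication $x \mapsto x_k(a)\,x\,x_k(b)$ rather than by the one-sided normal form $u = y_k(a)u'$ you describe, but since the right-hand factor lies in $U^+$ it acts trivially on the flag variety, so your computation arrives at the same formulas.
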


The geometric crystal $\cV(\ge)=(X, \{e_i\}_{i \in I}, 
\{\gamma_i\}_{i \in I}, \{\veps_i\}_{i\in I})$ is said to be positive if it has a 
positive structure \cite{BK, N, KNO}. 
Roughly speaking this means that each of the rational maps 
$e^c_i$, $\veps_i$  and $\gamma_i$ are
given by ratio of polynomial functions with positive coefficients. 
For example, $B_{\bf i}^-$ is a positive geometric crystal.

For a dominant weight  $\lambda \in P^+$ of level $l$,
Kashiwara defined the crystal base $(L(\lambda), B(\lambda))$
\cite{Kas1} (also see \cite{Lu})  for the integrable highest weight $\ge$-module $V(\lambda)$. As shown in \cite{KMN1}, the crystal
$B(\lambda)$ can be realized as a set of paths in the semi-infinite tensor product $ \cdots \otimes B^l \otimes B^l \otimes B^l$ where
$B^l$ is a perfect crystal of level $l$. This is called the path realization of the crystal $B(\lambda)$. When a family of perfect crystals 
$\{B^l\}_{l\geq 1}$ are coherent \cite{KKM}, it has a limit $B^\infty$. The positive geometric crystals are related to Kashiwara crystals via the 
ultra-discretization functor $\mathcal{UD}$ \cite{BK, N} which transforms positive rational 
functions to piecewise-linear functions by the simple correspondence:
$$
x \times y \longmapsto x+y, \qquad \frac{x}{y} \longmapsto x - y, 
\qquad x + y \longmapsto {\rm max}\{x, y\}.
$$
It was conjectured in \cite{KNO} that for each affine Lie algebra $\ge$ and 
each Dynkin index $k \in I \setminus \{0\}$, there exists a positive geometric crystal
$\cV(\ge)=(X, \{e_i\}_{i \in I}, \{\gamma_i\}_{i \in I}, \\
\{\veps_i\}_{i\in I})$ whose ultra-discretization $\mathcal{UD}(\cV)$ is isomorphic 
to the limit $B^{\infty}$ of a coherent family of perfect crystals for the Langlands dual $\ge^L$. If $\ge$ is simply laced then the Langland dual is 
$\ge$ itself. So far this conjecture has been proved for the Dynkin index $k = 1$ and $\ge = A_n^{(1)}, 
B_n^{(1)}, C_n^{(1)}, D_n^{(1)}, A_{2n-1}^{(2)}, A_{2n}^{(2)},$
$D_{n+1}^{(2)}$ \cite{KNO}, $\ge = D_4^{(3)}$ \cite{IMN}, $\ge = G_2^{(1)}$ \cite{N4}. For $k > 1$ , this conjecture has been shown to hold only for $\ge = A_n^{(1)}$ \cite{MN1, MN2}.

In this paper we prove the conjecture in \cite{KNO} for $\ge = D_5^{(1)}$ and Dynkin idex $k = 5$, the spin node. In Section 2,  we construct a positive geometric crystal $\cV(D_5^{(1)})$ in the level zero fundamental spin module $W(\varpi_5)$. In Section 3, for $l \geq 1$ we coordinatize the perfect crystal $B^{5,l}$ for $D_5^{(1)}$ given in \cite{KMN2} and define an explicit $0$-action. Then we show that the family of perfect crystals $\{B^{5,l}\}_{l\geq 1}$ is a coherent family and determine its limit $B^{5,\infty}$. In the last section we ultra-discretize the positive geometric crystal $\cV(D_5^{(1)})$ and show that it is isomorphic to  $B^{5,\infty}$ as crystals.

\section{Affine Geometric Crystal \bf{$\mathcal{V}(D_5^{(1)})$}}

From now on we assume $\ge$ to be the affine Lie algebra $D_5^{(1)}$ with index set $I = \{0,1,2,3,4,5\}$, Cartan matrix $A = (a_{ij})_{i,j \in I}$ where $a_{ii} = 2, a_{j,j + 1} = -1 = a_{j+1,j}, \; j = 1,2,3, a_{02} = a_{20} = a_{35} = a_{53} = -1, a_{ij} = 0$ otherwise and Dynkin diagram:
\begin{center}
\begin{tikzpicture}
\draw (-2,1)--(-1,0); \draw (-2,-1)--(-1,0); \draw (-1,0)--(1,0); \draw (1,0)--(2,1); \draw (1,0)--(2,-1);
\draw [fill] (-2,1) circle [radius=0.1] node[left=.1pt] (a) {0};
\draw [fill] (-2,-1) circle [radius=0.1] node[left=.1pt] (b) {1};
\draw [fill] (-1,0) circle [radius=0.1] node[left=.2pt] (c) {2};
\draw [fill] (1,0) circle [radius=0.1] node[right=.2pt] (d) {3};
\draw [fill] (2,1) circle [radius=0.1] node[right=.1pt] (e) {4};
\draw [fill] (2,-1) circle [radius=0.1] node[right=.1pt] (f) {5};        
\end{tikzpicture}
\end{center}
Let $\{\alpha_0, \alpha_1, \alpha_2, \alpha_3, \alpha_4, \alpha_5\}, \ \{\check{\alpha_0}, \check{\alpha_1}, \check{\alpha_2}, \check{\alpha_3}, \check{\alpha_4}, \check{\alpha_5}\}$ and $\{\Lambda_0, \Lambda_1, \Lambda_2, \Lambda_3, \Lambda_4, \Lambda_5\}$ denote the set of simple roots, simple coroots and fundamental weights, respectively.
Then ${\bf c} =\check{\alpha_0}+\check{\alpha_1}+2\check{\alpha_2}+2\check{\alpha_3}+\check{\alpha_4}+\check{\alpha_5}$ and $\delta = \al_0 +\al_1+2\al_2+2\al_3+\al_4+\al_5$ are the central element and null root respectively. The sets $P_{cl} = \oplus_{j=0}^5 \Z\L_j$ and $P = P_{cl}\oplus\Z\delta$ are called classical weight lattice and weight lattice respectively.

We consider the Dynkin diagram automorphism\footnote{We thank T. Nakashima for suggesting it to us.} $\sigma$ defined by 
$$\sigma : 0 \mapsto 5, 1 \mapsto 4, 2 \mapsto 3, 3 \mapsto 2, 4 \mapsto 0, 5 \mapsto 1.$$

\medskip

\begin{tikzpicture}
\draw (-10,4)--(-9,3); \draw (-10,2)--(-9,3); \draw (-9,3)--(-8,3); \draw (-8,3)--(-7,4); \draw (-8,3)--(-7,2);  \draw (-1,4)--(-2,3); \draw (-1,2)--(-2,3); \draw (-2,3)--(-3,3); \draw (-3,3)--(-4,4); \draw (-3,3)--(-4,2);
\draw [fill] (-10,4) circle [radius=0.1] node[left=.1pt] (a) {0};
\draw [fill] (-10,2) circle [radius=0.1] node[left=.1pt] (b) {1};
\draw [fill] (-9,3) circle [radius=0.1] node[left=.2pt] (c) {2};
\draw [fill] (-8,3) circle [radius=0.1] node[right=.2pt] (d) {3};
\draw [fill] (-7,4) circle [radius=0.1] node[right=.1pt] (e) {4};
\draw [fill] (-7,2) circle [radius=0.1] node[right=.1pt] (f) {5};
\draw [->](-6,3)--(-5,3) ;
\draw [fill=white] (-5.5,3) node[above=.1pt, black] {$\sigma$};
\draw [fill] (-1,4) circle [radius=0.1] node[right=.1pt] (g) {0};
\draw [fill] (-1,2) circle [radius=0.1] node[right=.1pt] (h) {1};
\draw [fill] (-2,3) circle [radius=0.1] node[right=.2pt] (i) {2};
\draw [fill] (-3,3) circle [radius=0.1] node[left=.2pt] (j) {3};
\draw [fill] (-4,4) circle [radius=0.1] node[left=.1pt] (k) {5};
\draw [fill] (-4,2) circle [radius=0.1] node[left=.1pt] (l) {4};
\path [draw,black, dotted,thick,rounded corners] 
               (b.north west) 
            -- (c.north west) 
            -- (e.north east) 
            -- (f.south east) 
            -- (b.south west) 
            -- cycle ;   
\draw [fill=white] (-9,4) node[below=.1pt, black] {$D_5$};             

\path [draw,black, dotted,thick, rounded corners] 
               (k.north west) 
            -- (g.north east) 
            -- (i.south east) 
            -- (l.south west) 
            -- (k.south west) 
            -- cycle ;   

\draw [fill=white] (-2,2) node[above=.1pt, black] {$D_5$};  
\end{tikzpicture}

\medskip

Let $\ge_j$ (resp. $\sigma(\ge)_j)$) be the subalgebra of $\ge$ 
(resp. $\sigma(\ge)$) with index set $I_j= I \setminus \{j\}$. Then observe that $\ge_0$ as well as $\ge_1$ and $\sigma(\ge)_1$ are isomorphic to $D_5$.

Let $W(\varpi_5)$ be the level $0$ fundamental $U'_q(\mathfrak{g})$-module associated with the level $0$ weight $\varpi_5 = \L_5 - \L_0$  \cite{K0}. By [\cite{K0}, Theorem 5.17], $W(\varpi_5)$ is a finite-dimensional irreducible integrable $U'_q(\mathfrak{g})$-module and has a global basis with a simple crystal. Thus, we can consider the specialization $q=1$ and obtain the finite-dimensional $D_5^{(1)}$-module $W(\varpi_5)$, which we call the fundamental $D_5^{(1)}$- module  and use the same notation as above. Below we give the explicit description of $W(\varpi_5)$.
\subsection{Fundamental Representation \bf{$W(\varpi_5)$} for \bf{$D_5^{(1)}$}} 
The fundamental $D_5^{(1)}$-module $W(\varpi_5)$ is a 16-dimensional module with the basis 
$$\{v= (i_1, i_2, i_3, i_4, i_5) |i_j \in \{+, -\}, \ i_1 i_2 i_3 i_4 i_5 = +\}.$$
The actions of the generators $e_k$, and $f_k$, $0\le k \le 5$ of $D_5^{(1)}$, on the basis vectors are given as follows.
\begin{displaymath}
\begin{split} 
& e_k (i_1, i_2, i_3, i_4, i_5) = \left \{
	\begin{array}{lllll}
	(-, -, i_3, i_4, i_5)    \hspace{.6cm}& & \text{ if  }  k = 0, \ (i_1, i_2)=(+,+)\\
	(i_1, \ldots, +, -,\ldots, i_5) & &\text{ if  } k \neq 0, \ k \neq 5, \ (i_k, i_{k+1})=(-,+)\\
	\hspace{1.2cm}k \hspace{.28cm}k+1 \\
	(i_1, i_2, i_3, +, +)    \hspace{.6cm}& & \text{ if  } k = 5, \ (i_4, i_5)=(-,-)\\
	0 \hspace{1cm} & &\text{ otherwise.}
	\end{array}
	\right. \\
& f_k (i_1, i_2, i_3, i_4, i_5) = \left \{
	\begin{array}{lllll}
	(+, +, i_3, i_4, i_5)    \hspace{.6cm}& & \text{ if  }  k = 0, \ (i_1, i_2)=(-,-)\\
	(i_1, \ldots, -, +,\ldots, i_5) & &\text{ if  } k \neq 0, \ k \neq 5, \ (i_k, i_{k+1})=(+,-)\\
	\hspace{1.2cm}k \hspace{.28cm}k+1 \\
	(i_1, i_2, i_3, -, -)    \hspace{.6cm}& & \text{ if  } k = 5, \ (i_4, i_5)=(+,+)\\
	0 \hspace{1cm} & &\text{ otherwise.}
	\end{array}
	\right.
\end{split}
\end{displaymath}

Furthermore, we observe that
\begin{displaymath}
\begin{split} 
& \langle {\check\alpha_k}, {\rm wt}(v)\rangle = \left \{
\begin{array}{lllll}
1   \hspace{.1cm}& & \text{ if  } \ k = 0, \ (i_1, i_2)=(-,-)\\
& & \text{ or  }  k \neq 0, \ k \neq 5, \ (i_k, i_{k+1})=(+,-)\\
& & \text{ or  }  k = 5, (i_4, i_5)=(+,+)\\
-1 & & \text{ if  } \ k = 0, \ (i_1, i_2)=(+,+)\\
& & \text{ or  }  k \neq 0, \ k \neq 5, (i_k, i_{k+1})=(-,+)\\
& & \text{ or  }  k = 5, \ (i_4, i_5)=(-,-)\\
0 & & \text{ otherwise.}
\end{array}
\right. \\
\end{split}
\end{displaymath}

Note that in $W(\varpi_5)$, we have $(+,+,+,+,+)$  (resp. $(-,+,+,+,-)$) is a $\mathfrak{g}_0$ (resp. $\mathfrak{g}_1$) highest weight vector with weight $\varpi_5 = \Lambda_5-\Lambda_0$ (resp. $\check{\varpi_5}:=\Lambda_4-\Lambda_1$). We define $\sigma(\L_j) = \L_{\sigma(j)}$ for $j \in I$. Then we define the action of $\sigma$ on $W(\varpi_5)$ by $\sigma(v) = v'$ if $\sigma({\rm wt}(v)) = {\rm wt}(v')$. 

\subsection{Affine Geometric Crystal \bf{$\mathcal{V}(D_5^{(1)})$} in \bf{$W(\varpi_5)$}}
Now we will construct the affine geometric crystal $\mathcal{V}(D_5^{(1)})$ in $W(\varpi_5)$ explicitly. For $\xi \in (\mathfrak{t}^*_{\text{cl}})_0$, let $t(\xi)$ be the translation as in [\cite{K0}, Sect 4]. Define simple reflections $s_k (\lambda) := \lambda - \lambda({\check\alpha_k})\alpha_k , k \in I$ and let $W = \langle s_k \mid k \in I \rangle$ be the Weyl group for $D_5^{(1)}$. Then we have
\begin{align*}
&t(\varpi_5) = \sigma s_4s_3s_2s_5s_3s_4s_1s_2s_3s_5=\sigma w_1,\\
&t(\check{\varpi_5}) = \sigma s_5s_3s_2s_4s_3s_5s_0s_2s_3s_4=\sigma w_2,
\end{align*}
where $w_1 = s_4s_3s_2s_5s_3s_4s_1s_2s_3s_5 \in W^0$ and $w_2 = s_5s_3s_2s_4s_3s_5s_0s_2s_3s_4 \in W^1$.
Associated with these Weyl group elements $w_1, w_2 \in W$, we define algebraic varieties $\mathcal{V}_1, \mathcal{V}_2 \subset W(\varpi_5)$ as follows.
\begin{align*}
\mathcal{V}_1	&=\big\{V_1(x):=Y_4(x_4^{(2)})Y_3(x_3^{(3)})Y_2(x_2^{(2)})Y_5(x_5^{(2)})Y_3(x_3^{(2)})Y_4(x_4^{(1)})\\
&\hspace{1cm}Y_1(x_1^{(1)})Y_2(x_2^{(1)})Y_3(x_3^{(1)})Y_5(x_5^{(1)})(+,+,+,+,+) | x_m^{(l)} \in \mathbb{C}^{\times} \big\}, \\
\mathcal{V}_2	&=\big\{V_2(y):=Y_5(y_5^{(2)})Y_3(y_3^{(3)})Y_2(y_2^{(2)})Y_4(y_4^{(2)})Y_3(y_3^{(2)})Y_5(y_5^{(1)})\\
&\hspace{1cm}Y_0(y_0^{(1)})Y_2(y_2^{(1)})Y_3(y_3^{(1)})Y_4(y_4^{(1)})(-,+,+,+,-) | y_m^{(l)} \in \mathbb{C}^{\times} \big\}, 
\end{align*}
where $x=(x_4^{(2)},x_3^{(3)}, x_2^{(2)}, x_5^{(2)}, x_3^{(2)}, x_4^{(1)}, x_1^{(1)}, x_2^{(1)}, x_3^{(1)}, x_5^{(1)})$ and $y=(y_5^{(2)}, y_3^{(3)}, y_2^{(2)}, \\ y_4^{(2)}, y_3^{(2)}, y_5^{(1)}, y_0^{(1)}, y_2^{(1)}, y_3^{(1)}, y_4^{(1)})$.

From the explicit actions of $f_k$'s on $W(\varpi_5)$, we observe that $f_k^2 =0$, for all $k \in I$. Therefore, we have
$$Y_k(c)=(1+\frac{f_k}{c}){\check\alpha_k}(c) = (1+\frac{f_k}{c})c^{\check\alpha_k} \ \text{for all} \ k \in I.$$ 
Thus we have the explicit forms of $V_1(x)$ and $V_2(y)$ as follows. \\

$V_1 (x) = x_5^{(2)}x_5^{(1)}(+,+,+,+,+) + \big(x_3^{(3)}x_5^{(1)} + \frac{x_3^{(3)}x_3^{(2)}x_3^{(1)}}{x_5^{(2)}}\big)(+,+,+,-,-) + \big(x_4^{(2)}x_5^{(1)} + \frac{x_4^{(2)}x_3^{(2)}x_3^{(1)}}{x_5^{(2)}} + \frac{x_4^{(2)}x_2^{(2)}x_3^{(1)}}{x_3^{(3)}} + \frac{x_4^{(2)}x_2^{(2)}x_4^{(1)}x_2^{(1)}}{x_3^{(3)}x_3^{(2)}}\big)(+,+,-,+,-) + \big(x_5^{(1)} + \frac{x_3^{(2)}x_3^{(1)}}{x_5^{(2)}} + \frac{x_2^{(2)}x_3^{(1)}}{x_3^{(3)}} + \frac{x_2^{(2)}x_4^{(1)}x_2^{(1)}}{x_3^{(3)}x_3^{(2)}} + \frac{x_2^{(2)}x_2^{(1)}}{x_4^{(2)}} \big)(+,+,-,-,+) + \big(x_4^{(2)}x_3^{(1)} + \frac{x_4^{(2)}x_4^{(1)}x_2^{(1)}}{x_3^{(2)}} + \frac{x_4^{(2)}x_4^{(1)}x_1^{(1)}}{x_2^{(2)}} \big)(+,-,+,+,-) + \big(x_3^{(1)} + \frac{x_4^{(1)}x_2^{(1)}}{x_3^{(2)}} + \frac{x_4^{(1)}x_1^{(1)}}{x_2^{(2)}} + \frac{x_3^{(3)}x_2^{(1)}}{x_4^{(2)}} + \frac{x_3^{(3)}x_3^{(2)}x_1^{(1)}}{x_4^{(2)}x_2^{(2)}} \big)(+,-,+,-,+) + \big(x_2^{(1)} + \frac{x_3^{(2)}x_1^{(1)}}{x_2^{(2)}} + \frac{x_5^{(2)}x_1^{(1)}}{x_3^{(3)}} \big)(+,-,-,+,+) + x_1^{(1)}(+,-,-,-,-) + x_4^{(2)}x_4^{(1)}(-,+,+,+,-) + \\ \big(x_4^{(1)} + \frac{x_3^{(3)}x_3^{(2)}}{x_4^{(2)}}\big)(-,+,+,-,+) + \big(x_3^{(2)} + \frac{x_2^{(2)}x_5^{(2)}}{x_3^{(3)}}\big)(-,+,-,+,+) +  x_5^{(2)}(-,-,+,+,+) +  x_2^{(2)}(-,+,-,-,-) + x_3^{(3)}(-,-,+,-,-) +  x_4^{(2)}(-,-,-,+,-) + (-,-,-,-,+)$,\\

$V_2 (y) = y_4^{(2)}y_4^{(1)}(-,+,+,+,-) + \big(y_3^{(3)}y_4^{(1)} + \frac{y_3^{(3)}y_3^{(2)}y_3^{(1)}}{y_4^{(2)}}\big)(-,+,+,-,+) + \big(y_5^{(2)}y_4^{(1)} + \frac{y_5^{(2)}y_3^{(2)}y_3^{(1)}}{y_4^{(2)}} + \frac{y_5^{(2)}y_2^{(2)}y_3^{(1)}}{y_3^{(3)}} + \frac{y_5^{(2)}y_2^{(2)}y_5^{(1)}y_2^{(1)}}{y_3^{(3)}y_3^{(2)}}\big)(-,+,-,+,+) + \big(y_4^{(1)} + \frac{y_3^{(2)}y_3^{(1)}}{y_4^{(2)}} + \frac{y_2^{(2)}y_3^{(1)}}{y_3^{(3)}} + \frac{y_2^{(2)}y_5^{(1)}y_2^{(1)}}{y_3^{(3)}y_3^{(2)}} + \frac{y_2^{(2)}y_2^{(1)}}{y_5^{(2)}} \big)(-,+,-,-,-) + \big(y_5^{(2)}y_3^{(1)} + \frac{y_5^{(2)}y_5^{(1)}y_2^{(1)}}{y_3^{(2)}} + \frac{y_5^{(2)}y_5^{(1)}y_0^{(1)}}{y_2^{(2)}} \big)(-,-,+,+,+) + \big(y_3^{(1)} + \frac{y_5^{(1)}y_2^{(1)}}{y_3^{(2)}} + \frac{y_5^{(1)}y_0^{(1)}}{y_2^{(2)}} + \frac{y_3^{(3)}y_2^{(1)}}{y_5^{(2)}} + \frac{y_3^{(3)}y_3^{(2)}y_0^{(1)}}{y_5^{(2)}y_2^{(2)}} \big)(-,-,+,-,-) + \big(y_2^{(1)} + \frac{y_3^{(2)}y_0^{(1)}}{y_2^{(2)}} + \frac{y_4^{(2)}y_0^{(1)}}{y_3^{(3)}} \big)(-,-,-,+,-) + y_0^{(1)}(-,-,-,-,+) + y_5^{(2)}y_5^{(1)}(+,+,+,+,+) +\\ \big(y_5^{(1)} + \frac{y_3^{(3)}y_3^{(2)}}{y_5^{(2)}}\big)(+,+,+,-,-) + \big(y_3^{(2)} + \frac{y_2^{(2)}y_4^{(2)}}{y_3^{(3)}}\big)(+,+,-,+,-) + y_4^{(2)}(+,-,+,+,-) +  y_2^{(2)}(+,+,-,-,+) + y_3^{(3)}(+,-,+,-,+) +  y_5^{(2)}(+,-,-,+,+) + (+,-,-,-,-)$. \\

Now for a given $x$ we solve the equation
\begin{align}
V_2(y) = a(x)\sigma (V_1(x)) \label{ytox}.
\end{align}
where $a(x)$ is a rational function in $x$ and the action of $\sigma$ on $V_1(x)$ is induced by its action on $W(\varpi_5)$. Though this equation is over-determined, it can be solved uniquely by comparing the coefficients of the basis vectors of $W(\varpi_5)$. We give the explicit  solutions of $a(x)$, and the variables $y_m^{(l)}$ below.
\begin{lemma} \label{yinx}
The rational function $a(x)$ and the complete solution of (\ref{ytox}) is:
\begin{align*} 
a(x) &=\frac{1}{x_5^{(2)}x_5^{(1)}}, \\
y_5^{(2)} &=\frac{1}{x_5^{(1)}} , \qquad y_0^{(1)} =\frac{x_4^{(2)}x_4^{(1)}}{x_5^{(2)}x_5^{(1)}},
		&y_5^{(1)} &=\frac{1}{x_5^{(2)}},\\
y_4^{(2)} &=\frac{x_1^{(1)}}{x_5^{(2)}x_5^{(1)}} \Big( \frac{x_5^{(2)}}{x_3^{(3)}}+\frac{x_3^{(2)}}{x_2^{(2)}}+\frac{x_2^{(1)}}{x_1^{(1)}}\Big), \hspace{-.5cm} 
		& y_4^{(1)} &=\Big( \frac{x_5^{(2)}}{x_3^{(3)}}+\frac{x_3^{(2)}}{x_2^{(2)}}+\frac{x_2^{(1)}}{x_1^{(1)}}\Big)^{-1},
\end{align*}
\begin{align*}
\hspace{1cm}y_3^{(3)} &=\frac{1}{x_5^{(1)}} \Big( \frac{x_2^{(2)}}{x_3^{(3)}}+\frac{x_3^{(2)}}{x_5^{(2)}}\Big),\\
y_3^{(2)} &=\frac{1}{x_5^{(2)}x_5^{(1)}} \Big( \frac{x_2^{(2)}}{x_3^{(3)}}+\frac{x_3^{(2)}}{x_5^{(2)}}\Big)^{-1} \Big( \frac{x_2^{(2)}x_2^{(1)}}{x_4^{(2)}}+\frac{x_2^{(2)}x_3^{(1)}}{x_3^{(3)}} +\frac{x_3^{(2)}x_3^{(1)}}{x_5^{(2)}} + \frac{x_2^{(2)}x_4^{(1)}x_2^{(1)}}{x_3^{(3)}x_3^{(2)}}\Big),\\
y_3^{(1)} &=\frac{x_2^{(2)}x_2^{(1)}}{x_5^{(2)}} \Big( \frac{x_2^{(2)}x_2^{(1)}}{x_4^{(2)}}+\frac{x_2^{(2)}x_3^{(1)}}{x_3^{(3)}} +\frac{x_3^{(2)}x_3^{(1)}}{x_5^{(2)}} +\frac{x_2^{(2)}x_4^{(1)}x_2^{(1)}}{x_3^{(3)}x_3^{(2)}}\Big)^{-1},
\end{align*}
\begin{align*} 
\hspace{1cm}y_2^{(2)} &=\frac{1}{x_5^{(1)}} \Big( \frac{x_3^{(3)}x_3^{(2)}}{x_4^{(2)}x_5^{(2)}}+\frac{x_4^{(1)}}{x_5^{(2)}}\Big), \hspace{1cm} 
	& y_2^{(1)} &=\frac{x_3^{(3)}x_3^{(2)}x_3^{(1)}}{(x_5^{(2)})^2x_5^{(1)}} \Big( \frac{x_3^{(3)}x_3^{(2)}}{x_4^{(2)}x_5^{(2)}}+\frac{x_4^{(1)}}{x_5^{(2)}}\Big)^{-1}.\\
\end{align*}
\end{lemma}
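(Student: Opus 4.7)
The plan is to verify the lemma by direct coefficient comparison in the basis of $W(\varpi_5)$. Since both $V_1(x)$ and $V_2(y)$ have been expanded explicitly as linear combinations of the 16 basis vectors $(i_1,i_2,i_3,i_4,i_5)$, the equation $V_2(y)=a(x)\sigma(V_1(x))$ is really a system of 16 scalar equations in the 11 unknowns $a(x), y_5^{(2)}, y_5^{(1)}, y_0^{(1)}, y_4^{(2)}, y_4^{(1)}, y_3^{(3)}, y_3^{(2)}, y_3^{(1)}, y_2^{(2)}, y_2^{(1)}$. It is over-determined, but the claim is that it has a unique rational solution, which is the content of the lemma.

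First I would compute the action of $\sigma$ on the basis. Since $\sigma(v)=v'$ iff $\sigma(\mathrm{wt}(v))=\mathrm{wt}(v')$, the weight formulas given for $\langle\check\alpha_k,\mathrm{wt}(v)\rangle$ together with $\sigma(\Lambda_j)=\Lambda_{\sigma(j)}$ determine $\sigma$ on basis vectors explicitly; in particular $\sigma$ sends the $\mathfrak{g}_0$-highest weight vector $(+,+,+,+,+)$ to the $\mathfrak{g}_1$-highest weight vector $(-,+,+,+,-)$, and similarly matches up all 16 basis vectors. Rewriting $V_1(x)$ in terms of this $\sigma$-permuted basis produces the right-hand side $a(x)\sigma(V_1(x))$ in the same basis as $V_2(y)$.

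Next I would solve the system by starting with the simplest equations. Several basis vectors appear with a single monomial coefficient in both $V_2(y)$ and $\sigma(V_1(x))$: for instance, the vector $(-,-,-,-,+)$ gives $y_0^{(1)} = a(x)\cdot x_4^{(2)}x_4^{(1)}$, and analogous one-term equations pin down $a(x)$, $y_5^{(2)}$, $y_5^{(1)}$ immediately, yielding $a(x)=1/(x_5^{(2)}x_5^{(1)})$. Substituting these back, one then reads off $y_0^{(1)}$ and $y_4^{(2)},y_4^{(1)}$ from further one-, two- or three-term identities (the vectors corresponding to $(-,+,-,+,+)$, $(-,+,+,-,+)$, etc.). With these in hand, successive elimination using the coefficients of $(+,+,+,-,-)$, $(+,+,-,+,-)$, $(+,+,-,-,+)$ produces $y_3^{(3)}$, $y_3^{(2)}$, $y_3^{(1)}$, $y_2^{(2)}$, $y_2^{(1)}$, exactly matching the closed forms stated in the lemma.

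The main obstacle is the consistency check: the system being over-determined, after having used eleven of the sixteen coefficient equations to fix the unknowns, I must verify that the remaining five equations hold automatically for the proposed rational expressions. This amounts to a routine but non-trivial algebraic verification — for each remaining basis vector, substitute the solved values of $y_m^{(l)}$ into the multi-term sum defining the coefficient in $V_2(y)$, clear denominators, and check equality with $(1/(x_5^{(2)}x_5^{(1)}))$ times the corresponding multi-term coefficient from $\sigma(V_1(x))$. I expect each such identity to reduce, after straightforward manipulation, to a common-denominator expansion that matches term by term, so no unexpected relation among the $x_m^{(l)}$ is forced. This establishes both existence and uniqueness of the solution.
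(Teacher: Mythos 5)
Your proposal follows exactly the route the paper takes: the paper itself states that equation (\ref{ytox}) "can be solved uniquely by comparing the coefficients of the basis vectors of $W(\varpi_5)$," and the lemma simply records the outcome of that coefficient-by-coefficient elimination, with the over-determinacy resolved by checking the remaining identities. Your plan — determine $\sigma$ on the basis from the weight data, peel off the single-monomial coefficients to get $a(x)$, $y_5^{(2)}$, $y_5^{(1)}$, $y_0^{(1)}$ first, then eliminate successively and verify the leftover equations — is the same computation, just spelled out in more detail than the paper provides.
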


Using Lemma \ref{yinx} we define the map
$$\bar{\sigma}: \mathcal{V}_1 \rightarrow \mathcal{V}_2,$$
$$V_1(x) \mapsto V_2(y).$$
Now we have the following result.

\begin{prop}\label{mapsigmabar} 
The map $\bar{\sigma}: \mathcal{V}_1 \rightarrow \mathcal{V}_2$ is a bi-positive birational isomorphism with the inverse positive rational map 
$$\bar{\sigma}^{-1}: \mathcal{V}_2 \rightarrow \mathcal{V}_1,$$
$$V_2(y) \mapsto V_1(x)$$
given by
\begin{align*} 
\hspace{.1cm} x_5^{(2)} &=\frac{1}{y_5^{(1)}},
		&x_5^{(1)} &=\frac{1}{y_5^{(2)}},\\
x_4^{(2)} &=\frac{y_0^{(1)}}{y_2^{(2)}}+\frac{y_2^{(1)}}{y_3^{(2)}}+\frac{y_3^{(1)}}{y_5^{(1)}}, \hspace{0cm} 
		& x_4^{(1)} &=\frac{y_0^{(1)}}{y_5^{(2)}y_5^{(1)}} \Big( \frac{y_0^{(1)}}{y_2^{(2)}}+\frac{y_2^{(1)}}{y_3^{(2)}}+\frac{y_3^{(1)}}{y_5^{(1)}}\Big)^{-1},
\end{align*}
\begin{align*}
x_3^{(3)} &=\frac{y_2^{(2)}y_2^{(1)}}{y_3^{(3)}y_3^{(2)}} + \frac{y_2^{(2)}y_3^{(1)}}{y_3^{(3)}y_5^{(1)}} + \frac{y_3^{(2)}y_3^{(1)}}{y_4^{(2)}y_5^{(1)}} + \frac{y_4^{(1)}}{y_5^{(1)}},\\
x_3^{(2)} &=\frac{y_2^{(2)}}{y_5^{(2)}y_5^{(1)}} \Big( \frac{y_2^{(2)}y_2^{(1)}}{y_3^{(3)}y_3^{(2)}} + \frac{y_2^{(2)}y_3^{(1)}}{y_3^{(3)}y_5^{(1)}} + \frac{y_3^{(2)}y_3^{(1)}}{y_4^{(2)}y_5^{(1)}} + \frac{y_4^{(1)}}{y_5^{(1)}} \Big)^{-1} \Big( \frac{y_2^{(1)}}{y_3^{(2)}}+\frac{y_3^{(1)}}{y_5^{(1)}}\Big),\\
x_3^{(1)} &=\frac{y_2^{(1)}}{y_5^{(2)}y_5^{(1)}}  \Big( \frac{y_2^{(1)}}{y_3^{(2)}}+\frac{y_3^{(1)}}{y_5^{(1)}}\Big)^{-1}, \qquad x_1^{(1)} =\frac{y_4^{(2)}y_4^{(1)}}{y_5^{(2)}y_5^{(1)}},
\end{align*}
\begin{align*} 
\hspace{.1cm} x_2^{(2)} &=\frac{y_3^{(3)}}{y_5^{(2)}} \Big(\frac{y_3^{(2)}y_3^{(1)}}{y_4^{(2)}y_5^{(1)}}+\frac{y_4^{(1)}}{y_5^{(1)}} \Big), \hspace{-.5cm} 
	& x_2^{(1)} &=\frac{y_3^{(2)}y_3^{(1)}}{y_5^{(2)}(y_5^{(1)})^2} \Big( \frac{y_3^{(2)}y_3^{(1)}}{y_4^{(2)}y_5^{(1)}}+\frac{y_4^{(1)}}{y_5^{(1)}}\Big)^{-1}.\\
\end{align*}
\begin{proof} The fact that $\bar{\sigma}$ is a bi-positive birational map follows from the explicit formulas. The rest follows by direct calculations.
\end{proof}
\end{prop}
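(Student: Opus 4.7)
The plan is to split the proof into two independent pieces: a positivity check and a birationality verification, both of which reduce to direct inspection and algebraic manipulation of the explicit formulas defining $\bar{\sigma}$ and the proposed inverse.

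Bi-positivity will be immediate from the formulas. Every $y_m^{(l)}$ in Lemma \ref{yinx} is expressed as a product of a Laurent monomial in the $x$-variables and a (possibly inverted) sum of monomials with positive coefficients, so $\bar{\sigma}$ is subtraction-free; the proposed inverse has the identical shape in the $y$-variables. Thus once both maps are written down, bi-positivity of each direction follows by inspection without further computation.

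For birationality, my plan is to verify that substituting the Lemma \ref{yinx} formulas into the proposed inverse returns $x_m^{(l)} \mapsto x_m^{(l)}$, and I would carry this out in order of increasing depth in the dependency graph. The relations $y_5^{(2)} = 1/x_5^{(1)}$ and $y_5^{(1)} = 1/x_5^{(2)}$ invert themselves trivially. Next, $x_1^{(1)}$ is recovered immediately from $y_4^{(2)} y_4^{(1)} = x_1^{(1)}/(x_5^{(2)} x_5^{(1)})$. The variables $x_4^{(2)}$ and $x_4^{(1)}$ come out after observing that the sum $\frac{y_0^{(1)}}{y_2^{(2)}}+\frac{y_2^{(1)}}{y_3^{(2)}}+\frac{y_3^{(1)}}{y_5^{(1)}}$ collapses, via Lemma \ref{yinx}, to the bracket $\frac{x_5^{(2)}}{x_3^{(3)}}+\frac{x_3^{(2)}}{x_2^{(2)}}+\frac{x_2^{(1)}}{x_1^{(1)}}$ that appears in the definition of $y_4^{(1)}$. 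The remaining variables follow the same pattern: each sum inside the inverse formula reduces to a bracket that already appears explicitly as a factor in Lemma \ref{yinx}, and the outer monomial factor absorbs the leftover $x_5^{(2)}, x_5^{(1)}$ denominators.

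The main obstacle is organizational rather than conceptual. The formulas are deeply nested with shared denominators, and a naive substitution yields long expressions with extensive partial cancellation. I would manage this by introducing short names for the recurring sums --- for example $A = \frac{x_2^{(2)}}{x_3^{(3)}}+\frac{x_3^{(2)}}{x_5^{(2)}}$ and $B = \frac{x_2^{(2)}x_2^{(1)}}{x_4^{(2)}}+\frac{x_2^{(2)}x_3^{(1)}}{x_3^{(3)}}+\frac{x_3^{(2)}x_3^{(1)}}{x_5^{(2)}}+\frac{x_2^{(2)}x_4^{(1)}x_2^{(1)}}{x_3^{(3)}x_3^{(2)}}$ --- and tracking how these brackets propagate through the substitutions. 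Once the inversion is verified variable by variable, $\bar{\sigma}$ and the proposed map are mutually inverse positive rational maps on the open dense locus where all denominators are nonzero, establishing the bi-positive birational isomorphism as claimed.
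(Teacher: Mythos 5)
Your proposal is correct and follows essentially the same route as the paper, which simply asserts that bi-positivity is visible from the subtraction-free formulas and that the inverse is verified "by direct calculations"; your plan of substituting the formulas of Lemma \ref{yinx} into the proposed inverse and checking variable by variable, with abbreviations for the recurring bracketed sums, is exactly that computation made explicit. No gap beyond the (routine) bookkeeping you already acknowledge.
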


It is known that $\cV_1$ (resp. $\cV_2$) has the structure of a $\ge_0$ (resp. $\ge_1$) positive geometric crystal (\cite{BK}, \cite{N}, \cite{KNO}). Indeed, note that taking the sesquence  ${\bf i} = (4,3,2,5,3,4,1,2,3,5)$ the explicit actions of \ $e_k^c, \ \gamma_k, \ \veps_k$ on $V_1(x)$ for $k=1,2,3,4,5$ are given by Theorem \ref{schubert}  as follows.
\begin{align*}
e_k^c(V_1(x)) &=
\begin{cases}
V_1(x_4^{(2)},x_3^{(3)},x_2^{(2)},x_5^{(2)},x_3^{(2)},x_4^{(1)},cx_1^{(1)},x_2^{(1)},x_3^{(1)},x_5^{(1)}), &k=1,\\
V_1(x_4^{(2)},x_3^{(3)},c_2 x_2^{(2)} ,x_5^{(2)},x_3^{(2)},x_4^{(1)},x_1^{(1)},\frac{c}{c_2}x_2^{(1)},x_3^{(1)},x_5^{(1)}), &k=2,\\
V_1(x_4^{(2)}, c_{3_1} x_3^{(3)},x_2^{(2)},x_5^{(2)}, c_{3_2} x_3^{(2)},x_4^{(1)},x_1^{(1)},x_2^{(1)}, \frac{c}{c_{3_1} c_{3_2}}x_3^{(1)},x_5^{(1)}), &k=3,\\
V_1(c_4 x_4^{(2)},x_3^{(3)} ,x_2^{(2)},x_5^{(2)},x_3^{(2)},\frac{c}{c_4} x_4^{(1)},x_1^{(1)},x_2^{(1)},x_3^{(1)},x_5^{(1)}), &k=4,\\
V_1(x_4^{(2)},x_3^{(3)},x_2^{(2)},c_5 x_5^{(2)},x_3^{(2)},x_4^{(1)},x_1^{(1)},x_2^{(1)},x_3^{(1)}, \frac{c}{c_5} x_5^{(1)} ), &k=5,
\end{cases} \\
\text{where} & \\
&c_2 =\frac{cx_2^{(2)}x_2^{(1)}+x_3^{(2)}x_1^{(1)}}{x_2^{(2)}x_2^{(1)}+x_3^{(2)}x_1^{(1)}}, \\
&c_{3_1} = \frac{cx_3^{(3)}(x_3^{(2)})^2 x_3^{(1)} + x_2^{(2)}x_5^{(2)}x_3^{(2)}x_3^{(1)} + x_2^{(2)}x_5^{(2)}x_4^{(1)}x_2^{(1)}}{x_3^{(3)}(x_3^{(2)})^2 x_3^{(1)} + x_2^{(2)}x_5^{(2)}x_3^{(2)}x_3^{(1)} + x_2^{(2)}x_5^{(2)}x_4^{(1)}x_2^{(1)}}, \\
&c_{3_2} = \frac{cx_3^{(3)}(x_3^{(2)})^2 x_3^{(1)} + cx_2^{(2)}x_5^{(2)}x_3^{(2)}x_3^{(1)} + x_2^{(2)}x_5^{(2)}x_4^{(1)}x_2^{(1)}}{cx_3^{(3)}(x_3^{(2)})^2 x_3^{(1)} + x_2^{(2)}x_5^{(2)}x_3^{(2)}x_3^{(1)} + x_2^{(2)}x_5^{(2)}x_4^{(1)}x_2^{(1)}}, \\
&c_4 = \frac{cx_4^{(2)}x_4^{(1)}+x_3^{(3)}x_3^{(2)}}{x_4^{(2)}x_4^{(1)}+x_3^{(3)}x_3^{(2)}}, \\
&c_5 = \frac{cx_5^{(2)}x_5^{(1)}+x_3^{(2)}x_3^{(1)}}{x_5^{(2)}x_5^{(1)}+x_3^{(2)}x_3^{(1)}}. \\
\gamma_k(V_1(x)) &= 
\begin{cases}
\frac{(x_1^{(1)})^2}{x_2^{(2)}x_2^{(1)}}, &k=1,\\
\frac{(x_2^{(2)})^2 (x_2^{(1)})^2}{x_3^{(3)}x_3^{(2)}x_1^{(1)}x_3^{(1)}}, &k=2, \\ 
\frac{(x_3^{(3)})^2 (x_3^{(2)})^2 (x_3^{(1)})^2}{x_4^{(2)}x_2^{(2)}x_5^{(2)}x_4^{(1)}x_2^{(1)}x_5^{(1)}}, &k=3, \\ 
\frac{(x_4^{(2)})^2 (x_4^{(1)})^2}{x_3^{(3)}x_3^{(2)}x_3^{(1)}}, &k=4, \\ 
\frac{(x_5^{(2)})^2 (x_5^{(1)})^2}{x_3^{(3)}x_3^{(2)}x_3^{(1)}}, &k=5. \\ 
\end{cases}\\
\veps_k(V_1(x)) &= 
\begin{cases}
\frac{x_2^{(2)}}{x_1^{(1)}}, &k=1, \\ 
\frac{x_3^{(3)}}{x_2^{(2)}} \big( 1 + \frac{x_3^{(2)}x_1^{(1)}}{x_2^{(2)}x_2^{(1)}} \big), &k=2, \\ 
\frac{x_4^{(2)}}{x_3^{(3)}} \big( 1 + \frac{x_2^{(2)}x_5^{(2)}}{x_3^{(3)}x_3^{(2)}}+ \frac{x_2^{(2)}x_5^{(2)}x_4^{(1)}x_2^{(1)}}{x_3^{(3)}(x_3^{(2)})^2 x_3^{(1)}} \big), &k=3, \\ 
\frac{1}{x_4^{(2)}} \big( 1 + \frac{x_3^{(3)}x_3^{(2)}}{x_4^{(2)}x_4^{(1)}} \big), &k=4, \\ 
\frac{x_3^{(3)}}{x_5^{(2)}} \big( 1 + \frac{x_3^{(2)}x_3^{(1)}}{x_5^{(2)}x_5^{(1)}} \big), &k=5. \\ 
\end{cases} 
\end{align*}

By choosing ${\bf i} = (5,3,2,4,3,5,0,2,3,4)$, we also have the following explicit actions of \ $\bar{e_k}^c,  \ \bar{\gamma}_k, \ \bar{\veps}_k, \  k=0,2,3,4,5$ on $V_2(y)$  by Theorem \ref{schubert}.
\begin{align*}
\bar{e_k}^c(V_2(y)) &=
\begin{cases}
V_2(y_5^{(2)}, y_3^{(3)}, y_2^{(2)}, y_4^{(2)}, y_3^{(2)}, y_5^{(1)}, cy_0^{(1)}, y_2^{(1)},y_3^{(1)}, y_4^{(1)}), &k=0,\\
V_2(y_5^{(2)}, y_3^{(3)}, \bar{c}_2 y_2^{(2)}, y_4^{(2)}, y_3^{(2)}, y_5^{(1)}, y_0^{(1)}, \frac{c}{\bar{c}_2}y_2^{(1)},y_3^{(1)}, y_4^{(1)}), &k=2,\\
V_2(y_5^{(2)}, \bar{c}_{3_1} y_3^{(3)}, y_2^{(2)}, y_4^{(2)}, \bar{c}_{3_2}y_3^{(2)}, y_5^{(1)}, y_0^{(1)}, y_2^{(1)},\frac{c}{\bar{c}_{3_1} \bar{c}_{3_2}}y_3^{(1)}, y_4^{(1)}), &k=3,\\
V_2(y_5^{(2)}, y_3^{(3)}, y_2^{(2)}, \bar{c}_4y_4^{(2)}, y_3^{(2)}, y_5^{(1)}, y_0^{(1)}, y_2^{(1)},y_3^{(1)}, \frac{c}{\bar{c}_4}y_4^{(1)}), &k=4,\\
V_2(\bar{c}_5y_5^{(2)}, y_3^{(3)}, y_2^{(2)}, y_4^{(2)}, y_3^{(2)}, \frac{c}{\bar{c}_5}y_5^{(1)}, y_0^{(1)}, y_2^{(1)},y_3^{(1)}, y_4^{(1)}), &k=5,
\end{cases} \\
\text{where} & \\
&\bar{c}_2 =\frac{cy_2^{(2)}y_2^{(1)}+y_3^{(2)}y_0^{(1)}}{y_2^{(2)}y_2^{(1)}+y_3^{(2)}y_0^{(1)}}, \\
&\bar{c}_{3_1} = \frac{cy_3^{(3)}(y_3^{(2)})^2 y_3^{(1)} + y_2^{(2)}y_4^{(2)}y_3^{(2)}y_3^{(1)} + y_2^{(2)}y_4^{(2)}y_5^{(1)}x_2^{(1)}}{y_3^{(3)}(y_3^{(2)})^2 y_3^{(1)} + y_2^{(2)}y_4^{(2)}y_3^{(2)}y_3^{(1)} + y_2^{(2)}y_4^{(2)}y_5^{(1)}x_2^{(1)}}, \\
&\bar{c}_{3_2} = \frac{cy_3^{(3)}(y_3^{(2)})^2 y_3^{(1)} + cy_2^{(2)}y_4^{(2)}y_3^{(2)}y_3^{(1)} + y_2^{(2)}y_4^{(2)}y_5^{(1)}x_2^{(1)}}{cy_3^{(3)}(y_3^{(2)})^2 y_3^{(1)} + y_2^{(2)}y_4^{(2)}y_3^{(2)}y_3^{(1)} + y_2^{(2)}y_4^{(2)}y_5^{(1)}x_2^{(1)}}, \\
&\bar{c}_4 = \frac{cy_5^{(2)}y_5^{(1)}+y_3^{(3)}y_3^{(2)}}{y_5^{(2)}y_5^{(1)}+y_3^{(3)}y_3^{(2)}}, \\
&\bar{c}_5 = \frac{cy_4^{(2)}y_4^{(1)}+y_3^{(2)}y_3^{(1)}}{y_4^{(2)}y_4^{(1)}+y_3^{(2)}y_3^{(1)}}. \\
\end{align*}
\begin{align*}
\bar{\gamma}_k(V_2(y)) &= 
\begin{cases}
\frac{(y_0^{(1)})^2}{y_2^{(2)}y_2^{(1)}}, &k=0,\\
\frac{(y_2^{(2)})^2 (y_2^{(1)})^2}{y_3^{(3)}y_3^{(2)}y_0^{(1)}y_3^{(1)}}, &k=2, \\ 
\frac{(y_3^{(3)})^2 (y_3^{(2)})^2 (y_3^{(1)})^2}{y_5^{(2)}y_2^{(2)}y_4^{(2)}y_5^{(1)}y_2^{(1)}y_4^{(1)}}, &k=3, \\ 
\frac{(y_4^{(2)})^2 (y_4^{(1)})^2}{y_3^{(3)}y_3^{(2)}y_3^{(1)}}, &k=4, \\ 
\frac{(y_5^{(2)})^2 (y_5^{(1)})^2}{y_3^{(3)}y_3^{(2)}y_3^{(1)}}, &k=5. \\ 
\end{cases}\\
\bar{\veps}_k(V_2(y)) &= 
\begin{cases}
\frac{y_2^{(2)}}{y_0^{(1)}}, &k=0, \\ 
\frac{y_3^{(3)}}{y_2^{(2)}} \big( 1 + \frac{y_3^{(2)}y_0^{(1)}}{y_2^{(2)}y_2^{(1)}} \big), &k=2, \\ 
\frac{y_5^{(2)}}{y_3^{(3)}} \big( 1 + \frac{y_2^{(2)}y_4^{(2)}}{y_3^{(3)}y_3^{(2)}}+ \frac{y_2^{(2)}y_4^{(2)}y_5^{(1)}y_2^{(1)}}{y_3^{(3)}(y_3^{(2)})^2 y_3^{(1)}} \big), &k=3, \\ 
\frac{y_3^{(3)}}{y_4^{(2)}} \big( 1 +\frac{y_3^{(2)}y_3^{(1)}}{y_4^{(2)}y_4^{(1)}} \big), &k=4, \\ 
\frac{1}{y_5^{(2)}} \big( 1 + \frac{y_3^{(3)}y_3^{(2)}}{y_5^{(2)}y_5^{(1)}} \big), &k=5. \\ 
\end{cases} 
\end{align*}
\begin{prop}\label{relation2-3} The following relations hold.\\
(i) $\bar{\sigma} e_2^c = \bar{e_3}^c \bar{\sigma}$ ,\\
(ii) $\bar{\sigma} e_3^c = \bar{e_2}^c \bar{\sigma}$.
\end{prop}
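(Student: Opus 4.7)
The plan is to verify both intertwining relations by a direct coordinate-wise comparison, using Lemma \ref{yinx} together with the explicit formulas for $e_k^c$ on $\cV_1$ and $\bar{e}_k^c$ on $\cV_2$ displayed just before the proposition. For part (i), set $V_1(x')=e_2^c(V_1(x))$, so that $x'$ differs from $x$ only in $x_2^{(2)}\mapsto c_2 x_2^{(2)}$ and $x_2^{(1)}\mapsto (c/c_2)x_2^{(1)}$, and set $V_2(y'')=\bar{e}_3^c(V_2(y))$ where $y=\bar{\sigma}(V_1(x))$, so that $y''$ differs from $y$ only in $y_3^{(3)}\mapsto\bar{c}_{3_1} y_3^{(3)}$, $y_3^{(2)}\mapsto\bar{c}_{3_2} y_3^{(2)}$, $y_3^{(1)}\mapsto (c/(\bar{c}_{3_1}\bar{c}_{3_2}))y_3^{(1)}$. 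Writing $y'=\bar{\sigma}(V_1(x'))$, the goal is to prove $y'=y''$ coordinate by coordinate.

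First, the coordinates $y_5^{(2)}, y_5^{(1)}, y_0^{(1)}, y_2^{(2)}, y_2^{(1)}$ are preserved immediately, since their formulas in Lemma \ref{yinx} do not involve $x_2^{(2)}$ or $x_2^{(1)}$. For $y_4^{(2)}$ and $y_4^{(1)}$, both are built from the shared factor
\[
\frac{x_5^{(2)}}{x_3^{(3)}}+\frac{x_3^{(2)}}{x_2^{(2)}}+\frac{x_2^{(1)}}{x_1^{(1)}},
\]
and substituting the perturbations, combined with the defining identity $c_2(x_2^{(2)}x_2^{(1)}+x_3^{(2)}x_1^{(1)})=cx_2^{(2)}x_2^{(1)}+x_3^{(2)}x_1^{(1)}$ for $c_2$, shows this factor is invariant, so $y_4^{(2)}$ and $y_4^{(1)}$ are preserved as well. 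The nontrivial step is to identify the ratios $y_3^{(l)'}/y_3^{(l)}$ for $l=1,2,3$ with $\bar{c}_{3_1}$, $\bar{c}_{3_2}$, $c/(\bar{c}_{3_1}\bar{c}_{3_2})$; this I would handle by substituting the formulas from Lemma \ref{yinx} into the definitions of $\bar{c}_{3_1}$ and $\bar{c}_{3_2}$, clearing denominators, and matching the result against the ratios obtained by plugging $x'$ directly into the expressions for $y_3^{(3)}, y_3^{(2)}, y_3^{(1)}$.

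Part (ii) follows the symmetric template: under $e_3^c$ the coordinates $x_3^{(3)}, x_3^{(2)}, x_3^{(1)}$ are perturbed by $c_{3_1}, c_{3_2}, c/(c_{3_1}c_{3_2})$, and one must show that the induced change in $y=\bar{\sigma}(V_1(x))$ is exactly the $\bar{e}_2^c$-perturbation of $y_2^{(2)}, y_2^{(1)}$ by $\bar{c}_2, c/\bar{c}_2$. Here $y_5^{(2)}, y_5^{(1)}, y_0^{(1)}$ have no $x_3$-dependence and are trivially preserved; the invariance of $y_4^{(2)}, y_4^{(1)}$ and of all three $y_3^{(l)}$, along with the correct transformation of $y_2^{(2)}, y_2^{(1)}$, reduce to polynomial identities in the $x$-variables using the defining relations of $c_{3_1}, c_{3_2}$. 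The main obstacle is algebraic bulk rather than conceptual difficulty: the expressions for $y_3^{(2)}, \bar{c}_{3_1}, \bar{c}_{3_2}$ are sums of four rational terms, and the cross-comparisons after clearing denominators yield lengthy polynomial identities whose organization is the principal source of labor. Conceptually the proposition is forced by the fact that $\bar{\sigma}$ is built from the Dynkin diagram automorphism $\sigma$ exchanging the nodes $2$ and $3$, so the identities we verify are the rational manifestation of this symmetry.
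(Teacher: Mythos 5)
Your proposal is correct and follows essentially the same route as the paper: a direct coordinate-wise comparison of $\bar{\sigma}(e_k^c(V_1(x)))$ with $\bar{e}_{\sigma(k)}^c(\bar{\sigma}(V_1(x)))$ using the explicit formulas of Lemma \ref{yinx}, with the same split into trivially invariant coordinates, the invariance of the shared factor via the defining identity for $c_2$, and the matching of the perturbation ratios against $\bar{c}_{3_1}$, $\bar{c}_{3_2}$, $c/(\bar{c}_{3_1}\bar{c}_{3_2})$ (the paper likewise only writes out the $(1,m)$ coordinates and leaves the rest, and part (ii), as "similar"). One small slip: the pairing should be $y_3^{(3)\prime}/y_3^{(3)}=\bar{c}_{3_1}$, $y_3^{(2)\prime}/y_3^{(2)}=\bar{c}_{3_2}$, $y_3^{(1)\prime}/y_3^{(1)}=c/(\bar{c}_{3_1}\bar{c}_{3_2})$, i.e.\ the superscript $l$ runs $3,2,1$ against that list, not $1,2,3$.
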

\begin{proof} We will prove the relation $(i)$ and relation $(ii)$ can be shown similarly.  Set $e_2^c (V_1(x)) = V_1(z)$, $\bar{\sigma}(V_1(z)) = V_2(y')$,  $\bar{\sigma} (V_1(x)) = V_2(y)$ and $\bar{e_3}^c (V_2(y)) = V_2(w)$. We need to show that $y_m^{(l)'} = w_m^{(l)}$ for $(l,m) \in \{(1,0), (1,2), (1,3), (1,4), (1,5), (2,2),\\ (2,3), (2,4), (2,5), (3,3)\}$. Let us check this equality for $l=1$ and the rest can be verified similarly. 
\begin{itemize}
\item $y_0^{(1)'} = \frac{z_4^{(2)}z_4^{(1)}}{z_5^{(2)}z_5^{(1)}} = \frac{x_4^{(2)}x_4^{(1)}}{x_5^{(2)}x_5^{(1)}} = y_0^{(1)} = w_0^{(1)}.$
\item $y_2^{(1)'} = \frac{z_3^{(3)}z_3^{(2)}z_3^{(1)}}{(z_5^{(2)})^2 z_5^{(1)}} \big(\frac{z_3^{(3)}z_3^{(2)}}{z_4^{(2)}z_5^{(2)}} + \frac{z_4^{(1)}}{z_5^{(2)}} \big)^{-1} = \frac{x_3^{(3)}x_3^{(2)}x_3^{(1)}}{(x_5^{(2)})^2 x_5^{(1)}} \big(\frac{x_3^{(3)}x_3^{(2)}}{x_4^{(2)}x_5^{(2)}} + \frac{x_4^{(1)}}{x_5^{(2)}} \big)^{-1} \\= y_2^{(1)} = w_2^{(1)}$.
\item $y_3^{(1)'} = \frac{z_2^{(2)}z_2^{(1)}}{z_5^{(2)}} \big(\frac{z_2^{(2)}z_3^{(1)}}{z_3^{(3)}} + \frac{z_3^{(2)}z_3^{(1)}}{z_5^{(2)}} + \frac{z_2^{(2)}z_2^{(1)}}{z_4^{(2)}} + \frac{z_2^{(2)}z_4^{(1)}z_2^{(1)}}{z_3^{(3)}z_3^{(2)}} \big)^{-1} \\
= \frac{cx_2^{(2)}x_2^{(1)}}{x_5^{(2)}} \Big(\frac{x_2^{(2)}x_3^{(1)}(cx_2^{(2)}x_2^{(1)}+x_3^{(3)}x_1^{(1)})}{x_3^{(3)}(x_2^{(2)}x_2^{(1)}+x_3^{(3)}x_1^{(1)})} + \frac{x_3^{(2)}x_3^{(1)}}{x_5^{(2)}} + \frac{cx_2^{(2)}x_2^{(1)}}{x_4^{(2)}} + \frac{cx_2^{(2)}x_4^{(1)}x_2^{(1)}}{x_3^{(3)}x_3^{(2)}} \Big)^{-1} \\
= y_3^{(1)} \cdot \frac{c(y_3^{(3)}(y_3^{(2)})^2y_3^{(1)} +y_2^{(2)}y_4^{(2)}y_3^{(2)}y_3^{(1)} +y_2^{(2)}y_4^{(2)}y_5^{(1)}y_2^{(1)})}{cy_3^{(3)}(y_3^{(2)})^2y_3^{(1)} +cy_2^{(2)}y_4^{(2)}y_3^{(2)}y_3^{(1)} +y_2^{(2)}y_4^{(2)}y_5^{(1)}y_2^{(1)} }  = w_3^{(1)}$.
\item $y_4^{(1)'} = \big(\frac{z_5^{(2)}}{z_3^{(3)}} + \frac{z_3^{(2)}}{z_2^{(2)}} + \frac{z_2^{(1)}}{z_1^{(1)}} \big)^{-1}= \big(\frac{x_5^{(2)}}{x_3^{(3)}} + \frac{x_3^{(2)}}{x_2^{(2)}} + \frac{x_2^{(1)}}{x_1^{(1)}} \big)^{-1}= y_4^{(1)} = w_4^{(1)}$.
\item $y_5^{(1)'} = \frac{1}{z_5^{(2)}} = \frac{1}{x_5^{(2)}} = y_5^{(1)} = w_5^{(1)}$.
\end{itemize}
\end{proof}
In order to give $\mathcal{V}_1$ a $\ge=D_5^{(1)}$-geometric crystal structure, we need to define the actions of $e_0^c, \gamma_0$, and $\veps_0$ on $V_1(x)$. We use the $\ge_1$-geometric crystal structure on $\cV_2$ to define the action of $e_0^c$, $\gamma_0$, and $\veps_0$ on $V_1(x)$ as follows. 
\begin{align}
e_0^c (V_1(x)) &:= \bar{\sigma}^{-1} \circ \overline{e_{\sigma{(0)}}}^c \circ \bar{\sigma} (V_1(x))= \bar{\sigma}^{-1} \circ \bar{e}_5^c(V_2(y)), \\
\gamma_0 (V_1(x)) &:= \bar{\sigma}^{-1} \circ\overline{ \gamma_{\sigma{(0)}}} \circ \bar{\sigma} (V_1(x))= \bar{\sigma}^{-1} \circ\overline{ \gamma_5} (V_2(y)). \label{gamma_0}\\
\veps_0 (V_1(x)) &:= \bar{\sigma}^{-1} \circ\overline{ \veps_{\sigma{(0)}}} \circ \bar{\sigma} (V_1(x))= \bar{\sigma}^{-1} \circ\overline{ \veps_5}(V_2(y)), \label{veps_0}
\end{align}
Set $B=B_x = \frac{x_2^{(2)}x_3^{(1)}}{x_3^{(3)}} + \frac{x_3^{(2)}x_3^{(1)}}{x_5^{(2)}}, C=C_x =  \frac{x_2^{(2)}x_2^{(1)}}{x_4^{(2)}} + \frac{x_2^{(2)}x_4^{(1)}x_2^{(1)}}{x_3^{(3)}x_3^{(2)}}$ and $A=A_x = B_x+C_x$. The following is one of the main results in this paper.

\begin{theorem} The algebraic variety $\cV = \cV_1 = \{V_1(x), e_k^c, \gamma_k, \veps_k \mid k \in I\}$ is a positive geometric crystal for the affine Lie algebra $\ge = D_5^{(1)}$ with the $e_0^c$, $\gamma_0$, and $\veps_0$ actions on $V_1(x)$ given by:
\begin{align*}
\gamma_0(V_1(x)) &= \frac{1}{x_2^{(2)}x_2^{(1)}},   \hspace{1.0cm}  \veps_0(V_1(x)) = x_5^{(1)} + A,  \\
e_0^{c}(V_1(x)) &= V_1(x') =  V_1(x_4^{(2)'}, x_3^{(3)'}, \ldots, x_5^{(1)'}) \ \text{where}
\end{align*}
\begin{align*} 
x_4^{(2)'} &= x_4^{(2)} \cdot \frac{c(x_5^{(1)}+B+\frac{x_2^{(2)}x_4^{(1)}x_2^{(1)}}{x_3^{(3)}x_3^{(2)}})+\frac{x_2^{(2)}x_2^{(1)}}{x_4^{(2)}}}{c(x_5^{(1)}+A)},  \hspace{-.2cm} \\ 
x_3^{(3)'} &= x_3^{(3)} \cdot \frac{c(x_5^{(1)}+\frac{x_3^{(2)}x_3^{(1)}}{x_5^{(2)}})+\frac{x_2^{(2)}x_3^{(1)}}{x_3^{(3)}}+C}{c(x_5^{(1)}+A)}, \hspace{-.2cm} \\ 
x_2^{(2)'} &= \frac{x_2^{(2)}}{c}, \hspace{0.5cm}  x_5^{(2)'} = x_5^{(2)} \cdot \frac{cx_5^{(1)}+A}{c(x_5^{(1)}+A)}, \\
x_3^{(2)'} &= x_3^{(2)} \cdot \frac{c(x_5^{(1)}+B)+C}{c(c(x_5^{(1)}+\frac{x_3^{(2)}x_3^{(1)}}{x_5^{(2)}})+\frac{x_2^{(2)}x_3^{(1)}}{x_3^{(3)}}+C)},  \hspace{-.2cm} \\
x_4^{(1)'} &= x_4^{(1)} \cdot \frac{x_5^{(1)}+A}{c(x_5^{(1)}+B+\frac{x_2^{(2)}x_4^{(1)}x_2^{(1)}}{x_3^{(3)}x_3^{(2)}})+\frac{x_2^{(2)}x_2^{(1)}}{x_4^{(2)}}}, \hspace{0.5cm} x_1^{(1)'} = \frac{x_1^{(1)}}{c},\\ 
x_2^{(1)'} &= \frac{x_2^{(1)}}{c}, \hspace{0.5cm} x_3^{(1)'} = x_3^{(1)} \cdot \frac{x_5^{(1)}+A}{c(x_5^{(1)}+B)+C}, \\
x_5^{(1)'} &= x_5^{(1)} \cdot \frac{x_5^{(1)}+A}{cx_5^{(1)}+A}.
\end{align*}
\end{theorem}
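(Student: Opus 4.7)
My plan is to proceed in three stages. First, I would derive the stated explicit formulas for $\gamma_0$, $\veps_0$, and $e_0^c$ from the defining relations $\gamma_0 = \bar{\gamma}_5 \circ \bar{\sigma}$, $\veps_0 = \bar{\veps}_5 \circ \bar{\sigma}$, and $e_0^c = \bar{\sigma}^{-1} \circ \bar{e}_5^c \circ \bar{\sigma}$. Substituting the expressions for $y_m^{(l)}$ from Lemma \ref{yinx} into the known formulas for $\bar{\gamma}_5$ and $\bar{\veps}_5$ on $\cV_2$ and recognizing the combinations $B$, $C$, and $A = B + C$ yields $\gamma_0 = 1/(x_2^{(2)} x_2^{(1)})$ and $\veps_0 = x_5^{(1)} + A$. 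For $e_0^c$, applying $\bar{e}_5^c$ in the $y$-coordinates scales $y_5^{(2)}$ by $\bar{c}_5$ and $y_5^{(1)}$ by $\bar{c}_5^{-1}$ while fixing the remaining $y_m^{(l)}$; inverting back to $x$-coordinates via Proposition \ref{mapsigmabar} then produces the displayed expressions for each $x_m^{(l)\prime}$.

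Next I would verify the axioms of Definition \ref{geometric}. For indices $i, j \in I_0 = \{1,2,3,4,5\}$, the required relations are already known from the $\ge_0$-geometric crystal structure on $\cV_1$ coming from Theorem \ref{schubert} applied to the reduced word for $w_1$. For axioms involving the new index $0$, the key Serre relation $e_0^{c_1} e_2^{c_1 c_2} e_0^{c_2} = e_2^{c_2} e_0^{c_1 c_2} e_2^{c_1}$ follows cleanly from Proposition \ref{relation2-3}: both sides rewrite as $\bar{\sigma}^{-1}$ applied to a three-fold product in $\bar{e}_5$ and $\bar{e}_3$ on $\cV_2$, where the corresponding Serre relation holds since $\cV_2$ is a $\ge_1$-geometric crystal with $a_{53} = -1$ in $\ge_1 \cong D_5$. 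The commutation $e_0 e_3 = e_3 e_0$ likewise reduces via Proposition \ref{relation2-3} to commutation of $\bar{e}_5$ with $\bar{e}_2$ on $\cV_2$, valid since $a_{52} = 0$. For $j \in \{1, 4\}$ I would first establish $\bar{\sigma} e_j^c = \bar{e}_{\sigma(j)}^c \bar{\sigma}$ by the same method as in Proposition \ref{relation2-3}, and then deduce $e_0 e_j = e_j e_0$ from the corresponding commutation of $\bar{e}_5$ with $\bar{e}_4$ and with $\bar{e}_0$ on $\cV_2$. The axioms of type (2) and (4) involving $\gamma_0$, $\veps_0$ reduce in the same way whenever $\sigma(k) \in I_1$, and otherwise by direct substitution; the essential rational identity underlying $\veps_0(e_5^c \cdot) = \veps_0(\cdot)$ is $(c/c_5) x_5^{(1)} + x_3^{(2)} x_3^{(1)}/(c_5 x_5^{(2)}) = x_5^{(1)} + x_3^{(2)} x_3^{(1)}/x_5^{(2)}$. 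Positivity of the entire structure is immediate from the rational formulas since numerators and denominators are subtraction-free.

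The main obstacle I expect is the commutation $e_0^{c_1} e_5^{c_2} = e_5^{c_2} e_0^{c_1}$, because $\sigma(5) = 1 \notin I_1$ and so this relation cannot be routed through $\cV_2$ via $\bar{\sigma}$. Establishing it requires a lengthy but mechanical manipulation of the explicit formulas for $e_0^c$ and $e_5^c$, checking agreement of all ten updated $x$-coordinates regardless of the order of application. The remaining verifications are either conceptual, via Proposition \ref{relation2-3} and the $\ge_1$-structure on $\cV_2$, or reduce to compact rational identities similar to the one above.
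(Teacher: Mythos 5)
Your proposal is correct and follows the same overall architecture as the paper's proof: derive the $0$-action formulas by conjugating the $\ge_1$-structure on $\cV_2$ through $\bar{\sigma}$, observe that all axioms not involving the index $0$ are already settled by the $\ge_0$-structure on $\cV_1$, route the Serre relation $e_0^{c_1}e_2^{c_1c_2}e_0^{c_2}=e_2^{c_2}e_0^{c_1c_2}e_2^{c_1}$ and the commutation $e_0e_3=e_3e_0$ through $\cV_2$ via Proposition \ref{relation2-3}, verify $\veps_0(e_0^c(V_1(x)))=c^{-1}\veps_0(V_1(x))$ and the $\gamma$-relations by direct substitution, and note positivity from the subtraction-free formulas. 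The one place you diverge is the commutations $e_0e_k=e_ke_0$ for $k\in\{1,4\}$: you propose to first prove the additional intertwining relations $\bar{\sigma}e_1^c=\bar{e}_4^c\bar{\sigma}$ and $\bar{\sigma}e_4^c=\bar{e}_0^c\bar{\sigma}$ and then transport the commutations $\bar{e}_5\bar{e}_4=\bar{e}_4\bar{e}_5$ and $\bar{e}_5\bar{e}_0=\bar{e}_0\bar{e}_5$ from $\cV_2$, whereas the paper checks $k=1$ by a direct coordinate-by-coordinate computation (exploiting that $e_1^{c_2}$ fixes $A$, $B$, $C$ and $x_5^{(1)}$) and asserts $k=4,5$ are similar. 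Your variant is legitimate but not obviously cheaper, since the two new intertwining relations require the same kind of coordinate verification as Proposition \ref{relation2-3}; on the other hand, you correctly identify that $k=5$ cannot be routed through $\cV_2$ (since $\sigma(5)=1\notin I_1$) and must be done by hand, and that this direct check is genuinely more involved than the $k=1$ case because $e_5^c$ alters $x_5^{(2)}$, $x_5^{(1)}$ and hence $B$ and $A$ --- a point the paper's ``similarly'' glosses over. Your rational identity underlying $\veps_0(e_5^c(V_1(x)))=\veps_0(V_1(x))$ is correct and is exactly the computation needed there.
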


\begin{proof} Since $\cV= \cV_1$ is a positive geometric crystal for $\ge_0$, to show that it is a positive geometric crystal for 
$\ge = D_5^{(1)}$ by Definition \ref{geometric}, it suffices to show that the following relations involving the $0$-action hold:
\begin{enumerate}
	\item $\gamma_0 (e_k^c (V_1(x))) = c^{a_{k0}} \gamma_0 (V_1(x))$ for all $k \in I$ \\
	\item $\gamma_k (e_0^c (V_1(x))) = c^{a_{0k}} \gamma_k (V_1(x))$ for all $k \in I$ \\
	\item $\veps_0 (e_0^c (V_1(x))) = c^{-1} \veps_0 (V_1(x))$\\
	\item $e_0^{c_1} e_k^{c_2} = e_k^{c_2} e_0^{c_1}$ for all $k \in \{1,3,4,5\}$ \\
	\item $e_0^{c_1} e_2^{c_1 c_2} e_0^{c_2}= e_2^{c_2} e_0^{c_1 c_2} e_2^{c_1}$ \\
\end{enumerate}
Note that \ $x_2^{(2)'}x_2^{(1)'}= c^{-2}x_2^{(2)}x_2^{(1)}, \  x_3^{(3)'}x_3^{(2)'}x_3^{(1)'}= c^{-2}x_3^{(3)}x_3^{(2)}x_3^{(1)}, \  x_4^{(2)'}x_4^{(1)'}= c^{-1}x_4^{(2)}x_4^{(1)}, \ x_5^{(2)'}x_5^{(1)'}= c^{-1}x_5^{(2)}x_5^{(1)}$.
Relations (1) and (2) follows easily from the defined actions. For example,
\begin{align*}
\gamma_0 (e_0^c (V_1(x))) &= \frac{1}{x_2^{(2)'}x_2^{(1)'}} =  \frac{1}{c^{-2}x_2^{(2)}x_2^{(1)}} = c^2 \gamma_0 (V_1(x)))\\
\gamma_0 (e_2^c (V_1(x))) &= \frac{1}{c_2x_2^{(2)}cc_2^{-1}x_2^{(1)}} =  c^{-1} \gamma_0 (V_1(x))) \\
\gamma_1 (e_0^c (V_1(x))) &= \frac{(x_1^{(1)'})^2}{x_2^{(2)'}x_2^{(1)'}} = \frac{(c^{-1}x_1^{(1)})^2}{c^{-2}x_2^{(2)}x_2^{(1)}} =  c^0 \gamma_1 (V_1(x)))\\
\end{align*}
Now we consider relation (3). We have
\begin{align*}
\veps_0 (e_0^c (V_1(x))) 
	&= x_5^{(1)'} + \frac{x_2^{(2)'}x_3^{(1)'}}{x_3^{(3)'}} + \frac{x_3^{(2)'}x_3^{(1)'}}{x_5^{(2)'}} +\frac{x_2^{(2)'}x_2^{(1)'}}{x_4^{(2)'}} + \frac{x_2^{(2)'}x_4^{(1)'}x_2^{(1)'}}{x_3^{(3)'}x_3^{(2)'}}\\
	&=(x_5^{(1)} + A) \Big[\frac{x_5^{(1)}}{cx_5^{(1)}+A} + \\
   	& \frac{x_3^{(1)}}{x_3^{(3)}x_5^{(2)}} \cdot \frac{x_5^{(1)}+A}{c(x_5^{(1)}+\frac{x_3^{(2)}x_3^{(1)}}{x_5^{(2)}})+ \frac{x_2^{(2)}x_3^{(1)}}{x_3^{(3)}} +C} \cdot \big(\frac{x_5^{(2)}x_2^{(2)}}{c(x_5^{(1)}+B)+C }  + \frac{x_3^{(3)}x_3^{(2)}}{cx_5^{(1)}+A}\big) + \\
    	& \frac{x_2^{(2)}x_2^{(1)}}{cx_4^{(2)}x_3^{(3)}x_3^{(2)}} \cdot \frac{x_3^{(3)}x_3^{(2)}(c(x_5^{(1)}+B)+C)+cx_4^{(2)}x_4^{(1)}(x_5^{(1)}+A)}{\big(c(x_5^{(1)}+B)+C \big) \big(c(x_5^{(1)}+B+\frac{x_2^{(2)}x_4^{(1)}x_2^{(1)}}{x_3^{(3)}x_3^{(2)}})+ \frac{x_2^{(2)}x_2^{(1)}}{x_4^{(2)}} \big)}  \Big] \\
	&=(x_5^{(1)} + A) \Big[\frac{x_5^{(1)}}{cx_5^{(1)}+A} + \\
   	& \frac{x_3^{(1)}}{x_3^{(3)}x_5^{(2)}} \cdot \frac{(x_5^{(1)}+A)(x_3^{(3)}x_3^{(2)}+x_5^{(2)}x_2^{(2)})}{\big(c(x_5^{(1)}+B)+C\big)\big(cx_5^{(1)}+A\big)} + \frac{x_2^{(2)}x_2^{(1)}}{cx_4^{(2)}x_3^{(3)}x_3^{(2)}} \cdot \frac{x_3^{(3)}x_3^{(2)}+x_4^{(2)}x_4^{(1)}}{c(x_5^{(1)}+B)+C}  \Big] \\	
	&=c^{-1} (x_5^{(1)} + A)  \Big[\frac{cx_5^{(1)}(c(x_5^{(1)}+B)+C)+c(x_5^{(1)}+A)B +(cx_5^{(1)}+A)C}{(cx_5^{(1)}+A)(c(x_5^{(1)}+B)+C) } \Big] \\
&=c^{-1} (x_5^{(1)} + A) = c^{-1} \veps_0 (V_1(x)),
\end{align*} 
since $c(x_5^{(1)}+A)B + (cx_5^{(1)}+A)C = cx_5^{(1)}A + cAB +AC = A(c(x_5^{(1)}+B) + C)$. Next we consider relation (4). Since $\mathcal{V}_2$ is a $\mathfrak{g}_1$-geometric crystal, we have $\bar{e_5}^{c_1} \bar{e_2}^{c_2}=\bar{e_2}^{c_2} \bar{e_5}^{c_1}$. Then by Proposition \ref{relation2-3}, we have 
\begin{equation*}
e_0^{c_1} e_3^{c_2} = \bar{\sigma}^{-1} \bar{e_{5}}^{c_1}  \bar{\sigma} e_3^{c_2} 
= \bar{\sigma}^{-1} \bar{e_{5}}^{c_1}  \bar{e_2}^{c_2} \bar{\sigma}
= \bar{\sigma}^{-1}\bar{e_2}^{c_2} \bar{e_5}^{c_1} \bar{\sigma}
= e_3^{c_2} \bar{\sigma}^{-1}  \bar{e_5}^{c_1} \bar{\sigma}
=e_3^{c_2} e_0^{c_1}.
\end{equation*}
Next we show relation (4) for $k=1$. It can be shown for $k = 4,5$ similarly. We need to show that
$$e_0^{c_1} e_1^{c_2} (V_1(x)) = e_1^{c_2} e_0^{c_1} (V_1(x)).$$
Set $e_1^{c_2} (V_1(x)) = V_1(z)$, $e_0^{c_1} (V_1(z)) = V_1(z')$, $e_0^{c_1} (V_1(x)) =V_1(x')$ and  $e_1^{c_2}  (V_1(x')) =V_1(u)$. We have to show that 
$$z_m^{(l)'} = u_m^{(l)}$$
for $(l,m) \in\{(1,1), (1,2), (1,3), (1,4), (1,5), (2,2), (2,3), (2,4), (2,5), (3,3)\}$. Observe that
$B_z = \frac{z_2^{(2)}z_3^{(1)}}{z_3^{(3)}} + \frac{z_3^{(2)}z_3^{(1)}}{z_5^{(2)}} = B_x = B$, $C_z = \frac{z_2^{(2)}z_2^{(1)}}{z_4^{(2)}} + \frac{z_2^{(2)}z_4^{(1)}z_2^{(1)}}{z_3^{(3)}z_3^{(2)}} = C_x = C$. Hence $A_z = A$. Now we have the following.
\begin{itemize}
\item $z_1^{(1)'} = \frac{z_1^{(1)}}{c_1} = \frac{c_2 x_1^{(1)}}{c_1} = c_2 x_1^{(1)'} = u_1^{(1)}$.
\item $z_2^{(1)'} = \frac{z_2^{(1)}}{c_1} = \frac{x_2^{(1)}}{c_1} = x_2^{(1)'} =  u_2^{(1)}$.
\item $z_3^{(1)'} = z_3^{(1)} \cdot \frac{z_5^{(1)}+A_z}{c_1(z_5^{(1)}+B_z)+C_z} = x_3^{(1)} \cdot \frac{x_5^{(1)}+A}{c_1(x_5^{(1)}+B)+C} = x_3^{(1)'} = u_3^{(1)}$.
\item $z_4^{(1)'} = z_4^{(1)} \cdot \frac{z_5^{(1)}+A_z}{c_1(z_5^{(1)}+B_z+\frac{z_2^{(2)}z_4^{(1)}z_2^{(1)}}{z_3^{(3)}z_3^{(2)}})+\frac{z_2^{(2)}z_2^{(1)}}{z_4^{(2)}}} = x_4^{(1)} \cdot \frac{x_5^{(1)}+A}{c_1(x_5^{(1)}+B+\frac{x_2^{(2)}x_4^{(1)}x_2^{(1)}}{x_3^{(3)}x_3^{(2)}})+\frac{x_2^{(2)}x_2^{(1)}}{x_4^{(2)}}}\\= x_4^{(1)'} = u_4^{(1)}$.
\item $z_5^{(1)'} = z_5^{(1)}\cdot \frac{z_5^{(1)}+A_z}{c_1z_5^{(1)}+A_z} = x_5^{(1)} \cdot \frac{x_5^{(1)}+A}{c_1x_5^{(1)}+A}= x_5^{(1)'} = u_5^{(1)}$.
\item $z_2^{(2)'} = \frac{z_2^{(2)}}{c_1} = \frac{x_2^{(2)}}{c_1} = x_2^{(2)'} =  u_2^{(2)}$.
\item $z_3^{(2)'} = z_3^{(2)} \cdot \frac{c_1(z_5^{(1)}+B_z)+C_z}{c_1(c_1(z_5^{(1)}+\frac{z_3^{(2)}z_3^{(1)}}{z_5^{(2)}})+\frac{z_2^{(2)}z_3^{(1)}}{z_3^{(3)}}+C_z)} = x_3^{(2)} \cdot \frac{c_1(x_5^{(1)}+B)+C}{c_1(c_1(x_5^{(1)}+\frac{x_3^{(2)}x_3^{(1)}}{x_5^{(2)}})+\frac{x_2^{(2)}x_3^{(1)}}{x_3^{(3)}}+C)} \\ = x_3^{(2)'} = u_3^{(2)}$.
\item $z_4^{(2)'} = z_4^{(2)} \cdot \frac{c_1(z_5^{(1)}+B_z+\frac{z_2^{(2)}z_4^{(1)}z_2^{(1)}}{z_3^{(3)}z_3^{(2)}})+\frac{z_2^{(2)}z_2^{(1)}}{z_4^{(2)}}}{c_1(z_5^{(1)}+A_z)} = x_4^{(2)} \cdot \frac{c_1(x_5^{(1)}+B+\frac{x_2^{(2)}x_4^{(1)}x_2^{(1)}}{x_3^{(3)}x_3^{(2)}})+\frac{x_2^{(2)}x_2^{(1)}}{x_4^{(2)}}}{c_1(x_5^{(1)}+A)} \\ = x_4^{(2)'} = u_4^{(2)}$.
\item $z_5^{(2)'} = z_5^{(2)} \cdot \frac{c_1z_5^{(1)}+A_z}{c_1(z_5^{(1)}+A_z)} = x_5^{(2)} \cdot \frac{c_1x_5^{(1)}+A}{c_1(x_5^{(1)}+A)} =  x_5^{(2)'} =  u_5^{(2)}$.
\item $z_3^{(3)'} = z_3^{(3)} \cdot \frac{c_1(z_5^{(1)}+\frac{z_3^{(2)}z_3^{(1)}}{z_5^{(2)}})+\frac{z_2^{(2)}z_3^{(1)}}{z_3^{(3)}}+C_z}{c_1(z_5^{(1)}+A_z)} = x_3^{(3)} \cdot \frac{c_1(x_5^{(1)}+\frac{x_3^{(2)}x_3^{(1)}}{x_5^{(2)}})+\frac{x_2^{(2)}x_3^{(1)}}{x_3^{(3)}}+C}{c_1(x_5^{(1)}+A)} \\ = x_3^{(3)'}= u_3^{(3)}$.
\end{itemize}

Finally to show relation (5) we observe that $\bar{e_5}^{c_1} \bar{e_3}^{c_1c_2} \bar{e_5}^{c_2}=\bar{e_3}^{c_2} \bar{e_5}^{c_1c_2} \bar{e_3}^{c_1}$ since $\mathcal{V}_2$ is a $\mathfrak{g}_1$-geometric crystal. Hence by Proposition \ref{relation2-3}, we have 
\begin{align*}
e_0^{c_1} e_2^{c_1c_2}e_0^{c_2} &= \bar{\sigma}^{-1} \bar{e_{5}}^{c_1}  \bar{\sigma} e_2^{c_1c_2} \bar{\sigma}^{-1} \bar{e_{5}}^{c_2}  \bar{\sigma} \\
&= \bar{\sigma}^{-1} \bar{e_{5}}^{c_1}  \bar{e_3}^{c_1c_2} \bar{e_{5}}^{c_2}  \bar{\sigma} \,
= \bar{\sigma}^{-1} \bar{e_3}^{c_2} \bar{e_5}^{c_1c_2} \bar{e_3}^{c_1}  \bar{\sigma} \\
&= e_2^{c_2} \bar{\sigma}^{-1}  \bar{e_5}^{c_1c_2} \bar{\sigma} e_2^{c_1}  \,
=e_2^{c_2} e_0^{c_1c_2}e_2^{c_1},
\end{align*}
which completes the proof.
\end{proof}

\section{Perfect Crystals of type \bf{$D_5^{(1)}$}}
For a positive integer $l$, we consider the sets $B^{5,l}$ and $B^{5,\infty}$ as follows. 
\begin{align*}
B^{5,l} &= 
     \left \{ b= (b_{ij})_{\scriptsize{\begin{array}{l}i \leq j \leq i+4,\\ 1 \leq i \leq 5\end{array}}} \middle|
\begin{aligned}
&b_{ij} \in \mZ_{\geq 0},\ \sum_{j=i}^{i+4} b_{ij} = l,\ 1 \leq i \leq 5,\\ 
&\sum_{j=i}^{5-t} b_{ij} = \sum_{j=i+t}^{4+t} b_{i+t,j},\ 1 \leq i, t \leq 4,\\
& \sum_{j=i}^{t} b_{ij} \geq \sum_{j=i+1}^{t+1} b_{i+1,j},\ 1 \leq i \leq t \leq 4
\end{aligned} \right\}, \\
B^{5,\infty} &= 
    \left \{ b= (b_{ij})_{\scriptsize{\begin{array}{l}i \leq j \leq i+4,\\ 1 \leq i \leq 5\end{array}}} \middle|
\begin{aligned}
&b_{ij} \in \mZ,\ \sum_{j=i}^{i+4} b_{ij} = 0,\ 1 \leq i \leq 5, \\
&\sum_{j=i}^{5-t} b_{ij} = \sum_{j=i+t}^{4+t} b_{i+t,j},\ 1 \leq i, t \leq 4
 \end{aligned} \right\}.
\end{align*}
For $\cB = B^{5,l} \, \text{or} \, B^{5,\infty}$ we define the maps $\e_k, \f_k : \cB \longrightarrow \cB \cup \{0\}$, $\veps_k , \vphi_k : \cB \longrightarrow \mZ$, $0 \leq k \leq 5$ and $\text{wt} : \cB \longrightarrow P_{cl}$, as follows. First we define conditions $(E_j), \ 1 \leq j \leq 5$:
\begin{align*}
(E_1) 	& \hspace{5pt}b_{22} > b_{13}+b_{14}+b_{44}, b_{22}+b_{23} > b_{14}+b_{33}+b_{34}, \\
		& \hspace{15pt}b_{22} > b_{14}+b_{33}+(b_{13}-b_{24})_{+}, \\
(E_2) 	&\hspace{5pt}b_{33}+b_{34}> b_{13}+b_{23}+b_{44}, b_{14}+b_{33}+b_{34} \geq b_{22}+b_{23},\\
		& \hspace{15pt}b_{34} > b_{23}+(b_{13}-b_{24})_{+}, \\
(E_3) 	&\hspace{5pt}b_{33} > b_{13}+b_{44}, b_{24} > b_{13},\\
		& \hspace{15pt}b_{14}+b_{33} \geq b_{22}+(b_{14}- b_{22}-b_{23}+b_{33}+b_{34})_{+}, \\
(E_4) 	&\hspace{5pt}b_{33} > b_{24}+b_{44}, b_{13} \geq b_{24},\\
		& \hspace{15pt}b_{13}+b_{14}+b_{33} \geq b_{22}+b_{24}+(b_{14}-b_{22}- b_{23}+b_{33}+b_{34})_{+}, \\
(E_5) 	&\hspace{5pt}b_{13}+b_{44} \geq b_{33}+(b_{13}-b_{24})_{+}, \\
		& \hspace{15pt} b_{13}+b_{14}+b_{44} \geq b_{22}+(b_{14}-b_{22}- b_{23}+b_{33}+b_{34})_{+},
\end{align*}
where $(x)_{+}=\text{max}(x,0)$. Then we define conditions $(F_j) \ (1 \leq j \leq 5)$ by replacing $>$ (resp. $\geq$) with $\geq$ (resp. $>$) in $(E_j)$.  Let $b=(b_{ij}) \in \cB$. Then for $\tilde{e_k}(b) = (b'_{ij})$ where 

\begin{align*}
k=0 &: 
	\begin{cases} 
	b'_{11} = b_{11} - 1,b'_{15} = b_{15} + 1, b'_{22} = b_{22} - 1, b'_{26} = b_{26} + 1, \\ 
	b'_{35} = b_{35} - 1,  b'_{37} = b_{37} + 1, b'_{46} = b_{46} - 1, b'_{48} = b_{48} + 1, \\ 
	b'_{57} = b_{57} - 1, b'_{59} = b_{59} + 1 \ \text{if} \ (E1)  \vspace{1pt}\\ 
	b'_{11} = b_{11} - 1,b'_{14} = b_{14} + 1, b'_{22} = b_{22} - 1, b'_{25} = b_{25} + 1, \\ 
	b'_{34} = b_{34} - 1,  b'_{37} = b_{37} + 1, b'_{45} = b_{45} - 1, b'_{48} = b_{48} + 1, \\ 
	b'_{57} = b_{57} - 1, b'_{59} = b_{59} + 1 \ \text{if} \ (E2) \vspace{1pt}\\ 
	b'_{11} = b_{11} - 1,b'_{14} = b_{14} + 1, b'_{22} = b_{22} - 1,b'_{23} = b_{23} + 1, \\
	b'_{24} = b_{24} - 1,  b'_{25} = b_{25} + 1, b'_{33} = b_{33} - 1, b'_{37} = b_{37} + 1, \\ 
	b'_{45} = b_{45} - 1, b'_{46} = b_{46} + 1, b'_{47} = b_{47} - 1, b'_{48} = b_{48} + 1,\\ 
	b'_{56} = b_{56} - 1, b'_{59} = b_{59} + 1 \ \text{if} \ (E3) \vspace{1pt}\\ 
	b'_{11} = b_{11} - 1,b'_{13} = b_{13} + 1, b'_{22} = b_{22} - 1, b'_{25} = b_{25} + 1, \\ 
	b'_{33} = b_{33} - 1,  b'_{36} = b_{36} + 1, b'_{45} = b_{45} - 1, b'_{48} = b_{48} +1, \\ 
	b'_{56} = b_{56} -1, b'_{59} = b_{59} + 1 \ \text{if} \ (E4) \vspace{1pt}\\ 
	b'_{11} = b_{11} - 1,b'_{13} = b_{13} + 1, b'_{22} = b_{22} - 1, b'_{24} = b_{24} + 1,\\  
	b'_{33} = b_{33} - 1,  b'_{35} = b_{35} + 1, b'_{44} = b_{44} - 1, b'_{48} = b_{48} + 1, \\ 
	b'_{55} = b_{55} - 1, b'_{59} = b_{59} + 1 \ \text{if} \ (E5) 
	\end{cases}
	\\
k=1 &: b'_{11} =b_{11} +1, b'_{12} =b_{12} -1, b'_{58} =b_{58} +1, b'_{59} =b_{59} -1 \\
k=2 &: 
	\begin{cases} 
	b'_{12} =b_{12} +1, b'_{13} =b_{13} -1, b'_{47} =b_{47} +1, b'_{48} =b_{48} -1 \ \text{if} \ b_{12} \geq b_{23}\\
	b'_{22} =b_{22} +1, b'_{23} =b_{23} -1, b'_{57} =b_{57} +1, b'_{58} =b_{58} -1 \ \text{if} \ b_{12} < b_{23} 
	\end{cases}
	\\
k=3 &: 
	\begin{cases} 
	b'_{13} =b_{13} +1, b'_{14} =b_{14} -1, b'_{36} =b_{36} +1, b'_{37} =b_{37} -1  \\
	 \hspace{1cm} \text{if} \ b_{13} \geq b_{24}, b_{13}+b_{23} \geq b_{24}+b_{34} \\ 
	 b'_{23} =b_{23} +1, b'_{24} =b_{24} -1, b'_{46} =b_{46} +1, b'_{47} =b_{47} -1  \\ 
	 \hspace{1cm} \text{if} \ b_{13} <  b_{24}, b_{23} \geq b_{34}\\ 
	 b'_{33} =b_{33} +1, b'_{34} =b_{34} -1, b'_{56} =b_{56} +1, b'_{57} =b_{57} -1  \\
	 \hspace{1cm} \text{if} \ b_{23} <  b_{34}, b_{13}+b_{23} < b_{24}+b_{34} 
	 \end{cases}
	 \\
k=4 &: 
	\begin{cases} 
	b'_{14} =b_{14} +1, b'_{15} =b_{15} -1, b'_{25} =b_{25} +1, b'_{26} =b_{26} -1 \\ 
	\hspace{1cm} \text{if} \ b_{14}+b_{33}+b_{34} \geq b_{22}+b_{23}\\ 
	b'_{34} =b_{34} +1, b'_{35} =b_{35} -1, b'_{45} =b_{45} +1, b'_{46} =b_{46} -1 \\ 
	\hspace{1cm} \text{if} \ b_{14}+b_{33}+b_{34} < b_{22}+b_{23} 
	\end{cases}
	\\
k=5 &: 
	\begin{cases} 
	b'_{24} =b_{24} +1, b'_{25} =b_{25} -1, b'_{35} =b_{35} +1, b'_{36} =b_{36} -1 \\ 
	\hspace{1cm}  \text{if} \ b_{24}+b_{44} \geq b_{33}\\ 
	b'_{44} =b_{44} +1, b'_{45} =b_{45} -1, b'_{55} =b_{55} +1, b'_{56} =b_{56} -1 \\ 
	\hspace{1cm}  \text{if} \ b_{24}+b_{44} < b_{33} 
	\end{cases} 
\end{align*}  and $b'_{ij} = b_{ij}$ otherwise.\\

Also $\tilde{f_k}(b) = (b'_{ij})$ where
\begin{align*}
k=0 &: 
	\begin{cases} 
	b'_{11} = b_{11} + 1,b'_{15} = b_{15} - 1, b'_{22} = b_{22} + 1, b'_{26} = b_{26} - 1, \\ 
	b'_{35} = b_{35} + 1,  b'_{37} = b_{37} - 1, b'_{46} = b_{46} + 1, b'_{48} = b_{48} - 1, \\ 
	b'_{57} = b_{57} + 1, b'_{59} = b_{59} - 1 \ \text{if} \ (F1)  \vspace{1pt}\\ 
	b'_{11} = b_{11} + 1,b'_{14} = b_{14} - 1, b'_{22} = b_{22} + 1, b'_{25} = b_{25} - 1, \\ 
	b'_{34} = b_{34} + 1,  b'_{37} = b_{37} - 1, b'_{45} = b_{45} + 1, b'_{48} = b_{48} - 1, \\ 
	b'_{57} = b_{57} + 1, b'_{59} = b_{59} - 1 \ \text{if} \ (F2) \vspace{1pt}\\ 
	b'_{11} = b_{11} + 1,b'_{14} = b_{14} - 1, b'_{22} = b_{22} + 1,b'_{23} = b_{23} - 1,\\ 
	b'_{24} = b_{24} + 1,  b'_{25} = b_{25} - 1, b'_{33} = b_{33} + 1, b'_{37} = b_{37} - 1, \\ 
	b'_{45} = b_{45} + 1, b'_{46} = b_{46} - 1, b'_{47} = b_{47} + 1, b'_{48} = b_{48} - 1, \\ 
	b'_{56} = b_{56} + 1, b'_{59} = b_{59} - 1 \ \text{if} \ (F3) \vspace{1pt}\\ 
	b'_{11} = b_{11} + 1,b'_{13} = b_{13} - 1, b'_{22} = b_{22} + 1, b'_{25} = b_{25} - 1, \\ 
	b'_{33} = b_{33} + 1,  b'_{36} = b_{36} - 1, b'_{45} = b_{45} + 1, b'_{48} = b_{48} - 1, \\ 
	b'_{56} = b_{56} + 1, b'_{59} = b_{59} - 1 \ \text{if} \ (F4) \vspace{1pt}\\ 
	b'_{11} = b_{11} + 1,b'_{13} = b_{13} - 1, b'_{22} = b_{22} + 1, b'_{24} = b_{24} - 1, \\ 
	b'_{33} = b_{33} + 1,  b'_{35} = b_{35} - 1, b'_{44} = b_{44} + 1, b'_{48} = b_{48} - 1, \\ 
	b'_{55} = b_{55} + 1, b'_{59} = b_{59} - 1 \ \text{if} \ (F5) 
	\end{cases}
	\\
k=1 &: b'_{11} =b_{11} -1, b'_{12} =b_{12} +1, b'_{58} =b_{58} -1, b'_{59} =b_{59} +1 \\
k=2 &: 
	\begin{cases} 
	b'_{12} =b_{12} -1, b'_{13} =b_{13} +1, b'_{47} =b_{47} -1, b'_{48} =b_{48} +1 \ \text{if} \ b_{12} > b_{23}\\ 
	b'_{22} =b_{22} -1, b'_{23} =b_{23} +1, b'_{57} =b_{57} -1, b'_{58} =b_{58} +1 \ \text{if} \ b_{12} \leq b_{23} 
	\end{cases}
	\\
k=3 &: 
	\begin{cases} 
	b'_{13} =b_{13} -1, b'_{14} =b_{14} +1, b'_{36} =b_{36} -1, b'_{37} =b_{37} +1  \\ 
	\hspace{1cm} \text{if} \ b_{13}> b_{24}, b_{13}+b_{23} > b_{24}+b_{34} \\ 
	b'_{23} =b_{23} -1, b'_{24} =b_{24} +1, b'_{46} =b_{46} -1, b'_{47} =b_{47} +1  \\ 
	\hspace{1cm} \text{if} \ b_{13} \leq  b_{24}, b_{23} > b_{34}\\ 
	b'_{33} =b_{33} -1, b'_{34} =b_{34} +1, b'_{56} =b_{56} -1, b'_{57} =b_{57} +1  \\ 
	\hspace{1cm} \text{if} \ b_{23} \leq  b_{34}, b_{13}+b_{23} \leq b_{24}+b_{34} 
	\end{cases}
	\\
k=4 &: 
	\begin{cases} 
	b'_{14} =b_{14} -1, b'_{15} =b_{15} +1, b'_{25} =b_{25} -1, b'_{26} =b_{26} +1 \\ 
	\hspace{1cm} \text{if} \ b_{14}+b_{33}+b_{34} > b_{22}+b_{23}\\ 
	b'_{34} =b_{34} -1, b'_{35} =b_{35} +1, b'_{45} =b_{45} -1, b'_{46} =b_{46} +1 \\ 
	\hspace{1cm} \text{if} \ b_{14}+b_{33}+b_{34} \leq b_{22}+b_{23} 
	\end{cases}
	\\
k=5 &: 
	\begin{cases} 
	b'_{24} =b_{24} -1, b'_{25} =b_{25} +1, b'_{35} =b_{35} -1, b'_{36} =b_{36} +1 \\ 
	\hspace{1cm}  \text{if} \ b_{24}+b_{44} > b_{33}\\ 
	b'_{44} =b_{44} -1, b'_{45} =b_{45} +1, b'_{55} =b_{55} -1, b'_{56} =b_{56} +1 \\ 
	\hspace{1cm}  \text{if} \ b_{24}+b_{44} \leq b_{33}
	\end{cases}
\end{align*} and $b'_{ij} = b_{ij}$ otherwise.\\

For $b \in B^{5,l}$ if $\tilde{e}_k(b)$ or $\tilde{f}_k(b)$ does not belong to $B^{5,l}$, then we assume it to be $0$. The maps 
$\veps_k(b), \ \vphi_k(b)$ and $\text{wt}_k(b)$ for $k=0,1,2,3,4,5$ are given as follows. We observe that 
$\text{wt}_k(b) = \vphi_k(b) - \veps_k(b)$,
$\vphi(b) = \sum_{k=0}^5\vphi_k(b)\L_k$, $\veps(b) = \sum_{k=0}^5\veps_k(b)\L_k$ and $\text{wt}(b) = \vphi(b) - \veps(b)$.
\begin{align*}
\veps_0(b) 	&= \begin{cases} l+\text{max} \{-b_{45}-b_{46}-b_{47}-b_{48}, -b_{13}-b_{34}-b_{35}-b_{36}-b_{37}, \\
				\hspace{15pt} -b_{24}-b_{34}-b_{35}-b_{36}-b_{37}, -b_{13}-b_{14}-b_{23}-b_{24}-b_{25}-b_{26}, \\
				\hspace{15pt} -b_{13}-b_{23}-b_{35}-b_{36}-b_{37} \} \ \text{if} \ b \in B^{5,l},\\ 
				\text{max} \{-b_{45}-b_{46}-b_{47}-b_{48}, -b_{13}-b_{34}-b_{35}-b_{36}-b_{37}, \\
				\hspace{15pt} -b_{24}-b_{34}-b_{35}-b_{36}-b_{37}, -b_{13}-b_{14}-b_{23}-b_{24}-b_{25}-b_{26}, \\
				\hspace{15pt} -b_{13}-b_{23}-b_{35}-b_{36}-b_{37} \} \ \text{if} \ b \in B^{5,\infty}, \end{cases} \\
\veps_1(b) 	&= b_{12},\\
\veps_2(b) 	&= \text{max} \{b_{13}, -b_{12}+b_{13}+b_{23}\},\\
\veps_3(b) 	&= \text{max} \{b_{14}, -b_{13}+b_{14}+b_{24}, -b_{13}+b_{14}-b_{23}+b_{24}+b_{34}\},\\
\veps_4(b) 	&= \begin{cases} l+\text{max} \{-b_{11}-b_{12}-b_{13}-b_{14}, \\
				\hspace{15pt} -b_{11}-b_{12}-b_{13}-2b_{14}+b_{22}+b_{23}-b_{33}-b_{34}\} \ \text{if} \ b \in B^{5,l},\\ 
				\text{max} \{-b_{11}-b_{12}-b_{13}-b_{14}, \\
				\hspace{15pt} -b_{11}-b_{12}-b_{13}-2b_{14}+b_{22}+b_{23}-b_{33}-b_{34}\} \ \text{if} \ b \in B^{5,\infty}, \end{cases} \\
\veps_5(b) 	&= \text{max} \{b_{11}+b_{12}+b_{13}-b_{22}-b_{23}-b_{24},\\
			&\hspace{15pt} b_{11}+b_{12}+b_{13}-b_{22}-b_{23}-2b_{24}+b_{33}-b_{44}\},\\
\vphi_0(b) 	&= \begin{cases} l+\text{max} \{ -b_{11}-b_{12}-b_{13}-b_{14}, -b_{11}-b_{12}-b_{22}+b_{44}, \\
				\hspace{15pt} -b_{11}-b_{12}-b_{13}-b_{22}+b_{33}, -b_{11}-b_{12}-b_{22}-b_{24}+b_{33}, \\
				\hspace{15pt} -b_{11}-b_{12}-b_{13}-b_{22}-b_{23}+b_{33}+b_{34} \} \ \text{if} \ b \in B^{5,l},\\ 
				\text{max} \{ -b_{11}-b_{12}-b_{13}-b_{14}, -b_{11}-b_{12}-b_{22}+b_{44}, \\
				\hspace{15pt} -b_{11}-b_{12}-b_{13}-b_{22}+b_{33}, -b_{11}-b_{12}-b_{22}-b_{24}+b_{33}, \\
				\hspace{15pt} -b_{11}-b_{12}-b_{13}-b_{22}-b_{23}+b_{33}+b_{34} \}  \ \text{if} \ b \in B^{5,\infty}, \end{cases} \\
\vphi_1(b) 	&= b_{11}-b_{22},\\
\vphi_2(b) 	&= \text{max} \{b_{22}-b_{33}, b_{12}+b_{22}-b_{23}-b_{33}\},\\
\vphi_3(b) 	&= \text{max} \{b_{33}-b_{44}, b_{23}+b_{33}-b_{34}-b_{44}, b_{13}+b_{23}-b_{24}+b_{33}-b_{34}-b_{44}\},\\
\vphi_4(b) 	&= \begin{cases} l+\text{max} \{-b_{33}-b_{35}-b_{36}-b_{37}, \\
				\hspace{15pt} b_{14}-b_{22}-b_{23}+b_{34}-b_{35}-b_{36}-b_{37}\} \ \text{if} \ b \in B^{5,l},\\ 
				\text{max}  \{-b_{33}-b_{35}-b_{36}-b_{37}, \\
				\hspace{15pt} b_{14}-b_{22}-b_{23}+b_{34}-b_{35}-b_{36}-b_{37}\}  \ \text{if} \ b \in B^{5,\infty}, \end{cases} \\
\vphi_5(b) 	&= \text{max} \{b_{44}, b_{24}-b_{33}+2b_{44}\},\\
\text{wt}_0(b) 	&= -b_{11}-b_{12} +b_{23}+b_{24}+b_{25}+b_{26},\\
\text{wt}_1(b) 	&= b_{11}-b_{12}-b_{22},\\
\text{wt}_2(b) 	&= b_{12} -b_{13}+b_{22}-b_{23}-b_{33},\\
\text{wt}_3(b) 	&= b_{13}-b_{14}+b_{23}-b_{24}+b_{33}-b_{34}-b_{44},\\
\text{wt}_4(b)	&= b_{11}+b_{12} +b_{13}+2b_{14}-b_{22}-b_{23}+b_{34}-b_{35} -b_{36}-b_{37}\\
\text{wt}_5(b) 	&= -b_{11}-b_{12} -b_{13}+b_{22}+b_{23}+2b_{24}-b_{33}+2b_{44}.
\end{align*}

\noindent Choose elements $b^0_{\bf{0}}, b^0_{\bf{1}}, b^0_{\bf{2}}, b^0_{\bf{3}}, b^0_{\bf{4}}, b^0_{\bf{5}}$ where
\begin{align*}
(b^0_{\bf{0}})_{ij} &=1 &\text{if} \ (i,j) &= (1,5),(2,6),(3,7),(4,8),(5,9),  \\
(b^0_{\bf{1}})_{ij} &=1 &\text{if} \ (i,j) &= (1,1),(2,5),(3,6),(4,7),(5,8),  \\
(b^0_{\bf{2}})_{ij} &=1 &\text{if} \ (i,j) &= (1,1),(1,4),(2,2),(2,5),(3,5),(3,7),(4,6),(4,8),(5,7),(5,9),  \\
(b^0_{\bf{3}})_{ij} &=1 &\text{if} \ (i,j) &= (1,1),(1,3),(2,2),(2,4),(3,3),(3,5),(4,5),(4,8),(5,6),(5,9),  \\
(b^0_{\bf{4}})_{ij} &=1 &\text{if} \ (i,j) &= (1,2),(2,3),(3,4),(4,5),(5,9),  \\
(b^0_{\bf{5}})_{ij} &=1 &\text{if} \ (i,j) &= (1,1),(2,2),(3,3),(4,4),(5,5),  
\end{align*}
and $(b^0_{\bf{k}})_{ij} =0$ \ otherwise,\ for $0 \leq k \leq 5$.\\

As shown in \cite{KMN2}, the crystal $B^{5,l}$ is a perfect crystal with the set of minimal elements:
\begin{align*}
(B^{5,l})_\text{min}& = \{b \in B^{5,l} \mid \langle {\bf c} , \veps (b)\rangle = l\}\\
& =\left \{ \sum_{k=0}^5a_k b^0_{\bf k} \mid  a_k \in \mZ_{\geq 0},\ a_0+a_1+2a_2+2a_3+a_4+a_5=l \right\}.
\end{align*}

For $\lambda \in P_{cl}$, consider the crystal  $T_{\lambda} = \{t_\lambda\}$ with
\begin{align*}
&\tilde{e_k} (t_\lambda) = \tilde{f_k} (t_\lambda) =0,	&\veps_k (t_\lambda) = \vphi_k (t_\lambda) =-\infty,  \\ 
&\text{wt}(t_\lambda)=\lambda,
\end{align*}
$k=0,1,2,3,4,5$. Then for $\lambda, \mu \in P_{cl}$, $T_{\lambda}\otimes B^{5,l} \otimes T_{\mu}$ is a crystal with the structure given by 
\begin{align*}
\tilde{e_k}(t_{\lambda}\otimes b \otimes t_{\mu})&= t_{\lambda}\otimes \tilde{e_k}b \otimes t_{\mu}, &\tilde{f_k}(t_{\lambda}\otimes b \otimes t_{\mu})&= t_{\lambda}\otimes \tilde{f_k}b \otimes t_{\mu}, \\
\veps_k(t_{\lambda}\otimes b \otimes t_{\mu})&= \veps_k(b) - \langle \check\alpha_k,\lambda \rangle, &\vphi_k(t_{\lambda}\otimes b \otimes t_{\mu})&= \vphi_k(b) + \langle \check\alpha_k,\mu \rangle, \\
\text{wt}(t_{\lambda}\otimes b \otimes t_{\mu})&= \lambda+\mu+\text{wt}(b)
\end{align*}
where $t_{\lambda}\otimes b \otimes t_{\mu} \in T_{\lambda}\otimes B^{5,l} \otimes T_{\mu}$.

The notion of a coherent family of perfect crystals and its limit is defined in \cite{KKM}. In the following theorem we prove that the family of $D_5^{(1)}$ crystals $\Bfamfive$ form a coherent family with limit $B^{5, \infty}$ containing the special vector $b^{\infty} = {\bf{0}}$ (i.e. $(b^{\infty})_{ij} = 0$ for $i \leq j \leq i+4,\ 1 \leq i \leq 5$). 
%
\begin{theorem} The family of the perfect crystal $\Bfamfive$ forms a coherent family and the crystal $B^{5,\infty}$ is its limit with the vector $b^{\infty}$.
\end{theorem}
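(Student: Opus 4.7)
The plan is to verify the axioms defining a coherent family of perfect crystals as stated in \cite{KKM}. Since $\Bfamfive$ is a family of perfect crystals of level $l$ by the construction of Section~3 together with \cite{KMN2}, what remains is to exhibit $B^{5,\infty}$ as a $P_{cl}$-weighted crystal with a distinguished vector $b^\infty$ satisfying $\veps(b^\infty)=\vphi(b^\infty)=0$, and to produce a compatible family of crystal morphisms $\sigma_l\colon (B^{5,l})_{\text{min}} \to B^{5,\infty}$ whose images exhaust $B^{5,\infty}$ and respect the natural identifications among the parameter sets of minimal elements.

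First, I would verify that $B^{5,\infty}$ equipped with $\e_k, \f_k, \veps_k, \vphi_k$, and $\text{wt}$ as defined above is a bona fide $P_{cl}$-weighted crystal. The crucial checks are: (a) the defining row-sum conditions $\sum_{j=i}^{i+4}b_{ij}=0$ and $\sum_{j=i}^{5-t}b_{ij}=\sum_{j=i+t}^{4+t}b_{i+t,j}$ are preserved by each branch of $\e_k$ and $\f_k$, which is a direct verification inspecting that the entry modifications cancel row by row; (b) with the positivity constraint $b_{ij}\geq 0$ dropped in $B^{5,\infty}$, the operators $\e_k,\f_k$ are everywhere defined and never output $0$; and (c) the piecewise-linear formulas for $\veps_k$ and $\vphi_k$ all vanish at $b^\infty=\mathbf{0}$, giving $\veps(b^\infty)=\vphi(b^\infty)=0$.

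Next, I would construct the maps $\sigma_l$. The minimal elements $(B^{5,l})_{\text{min}}$ are parameterized by tuples $(a_0,\ldots,a_5)\in \mZ_{\geq 0}^6$ with $a_0+a_1+2a_2+2a_3+a_4+a_5=l$, equivalently by $(P_{cl}^+)_l$ via $b\mapsto \veps(b)=\sum_{k=0}^5 a_k\Lambda_k$. Each such $\lambda \in (P_{cl}^+)_l$ decomposes uniquely as $l\Lambda_0+\mu$ with $\mu\in P_{cl}$ of level $0$. I would define $\sigma_l$ by sending the minimal element indexed by $(a_0,a_1,\ldots,a_5)$ to the corresponding tableau in $B^{5,\infty}$ obtained by a uniform shift of entries that takes row-sum $l$ to row-sum $0$ while preserving the second family of identities, realized by subtracting an appropriate canonical reference tableau at level $l$. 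The resulting $\sigma_l(b)$ is uniquely determined by the level-zero part $\mu$, giving a well-defined injection into $B^{5,\infty}$ whose images at different levels are compatible and jointly exhaust $B^{5,\infty}$.

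The main obstacle is the compatibility check: on each minimal element $b \in (B^{5,l})_{\text{min}}$ the morphism $\sigma_l$ must intertwine the crystal operators $\e_k,\f_k$ wherever both sides are defined and must satisfy the level-shift identities $\veps(\sigma_l(b)) = \veps(b)-l\Lambda_0$, $\vphi(\sigma_l(b)) = \vphi(b)-l\Lambda_0$, and $\text{wt}(\sigma_l(b))=\text{wt}(b)$. This reduces to a careful case-by-case matching of the piecewise-linear formulas; the only indices requiring nontrivial tracking are $k=0$ and $k=4$, whose $\veps_k$ and $\vphi_k$ on $B^{5,l}$ carry an explicit additive $l$ that precisely accounts for the level shift. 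Once intertwining is confirmed and surjectivity $\bigcup_l \sigma_l((B^{5,l})_{\text{min}})=B^{5,\infty}$ follows from the observation that any $b\in B^{5,\infty}$ becomes a nonnegative tableau of some level $l$ after a uniform entry shift, the theorem follows from the definition of coherence in \cite{KKM}.
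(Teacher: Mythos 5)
Your overall strategy (verify Definition 4.1 of \cite{KKM}; pass from level $l$ to level $0$ by subtracting a reference tableau; track the explicit $+l$ appearing in $\veps_0,\veps_4,\vphi_0,\vphi_4$) is in the right spirit and overlaps with the paper's proof, but two of your steps do not match what coherence actually requires, and one of them would fail. First, the embeddings demanded by \cite{KKM} are indexed by pairs $(l,b^0)$ with $b^0\in (B^{5,l})_{\min}$, each defined on the \emph{whole} crystal $T_{\veps(b^0)}\otimes B^{5,l}\otimes T_{-\vphi(b^0)}$, and each is required to send its own minimal element $b^0$ to $b^\infty$; in particular $\veps$ and $\vphi$ of the image of $b^0$ must equal $0$, not $\veps(b^0)-l\L_0$ as in your ``level-shift identities.'' Your $\sigma_l$, defined only on $(B^{5,l})_{\min}$ and sending the minimal element with level-zero part $\mu$ to a $\mu$-dependent (generally nonzero) tableau, is not the object the definition asks for. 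The paper instead takes $f_{(l,b^0)}(t_{\veps(b^0)}\otimes b\otimes t_{-\vphi(b^0)})=b-b^0$ entrywise, so the reference tableau is $b^0$ itself and varies with the embedding, not merely with the level $l$; with that normalization the intertwining and the identities $\veps_k(b-b^0)=\veps_k(b)-\langle\check\al_k,\veps(b^0)\rangle$, $\vphi_k(b-b^0)=\vphi_k(b)-\langle\check\al_k,\vphi(b^0)\rangle$ are exactly what must be checked.

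Second, your surjectivity step is where the real work lies and your argument for it would not go through. The condition is $B^{5,\infty}=\bigcup_{(l,b^0)}\operatorname{Im}f_{(l,b^0)}$, i.e.\ every $b'\in B^{5,\infty}$ must be written as $b-b^0$ with $b^0$ minimal and $b=b'+b^0\in B^{5,l}$; the union of the images of the minimal elements alone is a thin subset and cannot exhaust $B^{5,\infty}$. Moreover a ``uniform entry shift'' does not produce elements of $B^{5,l}$: it destroys the identities $\sum_{j=i}^{5-t}b_{ij}=\sum_{j=i+t}^{4+t}b_{i+t,j}$ for $t\geq 2$ (the two sides have $6-i-t$ and $5-i$ summands respectively), and in any case one must also arrange $b'_{ij}+b^0_{ij}\geq 0$ together with the interlacing inequalities $\sum_{j=i}^{t}b_{ij}\geq\sum_{j=i+1}^{t+1}b_{i+1,j}$. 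The paper handles this by an explicit, order-dependent choice of $a_1,a_2,a_3,a_4,a_5,a_0$ as maxima of linear expressions in the $b'_{ij}$, chosen precisely so that the associated minimal element $b^0$ of level $l=a_0+a_1+2a_2+2a_3+a_4+a_5$ satisfies $b'+b^0\in B^{5,l}$. Without such a construction the proof of condition (3) of coherence is missing.
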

\begin{proof} Set $J=\{(l,b)| l \in \mZ_{>0}, b \in (B^{5,l})_\text{min}\}$. By (\cite{KKM}, Definition 4.1), we need to show that
\begin{enumerate}
\item wt$(b^{\infty})=0, \veps(b^{\infty})=\vphi(b^{\infty})=0,$
\item for any $(l,b) \in J$, there exists an embedding of crystals
$$f_{(l,b)} : T_{\veps(b)} \otimes B^{5, l} \otimes T_{-\vphi(b)} \longrightarrow B^{5, \infty}$$
where $f_{(l,b)}(t_{\veps(b)} \otimes b \otimes t_{-\vphi(b)}) = b^{\infty}$, and
\item $B^{5, \infty} = \cup_{(l,b) \in J}$ Im $f_{(l,b)}$.\\
\end{enumerate} 

Since $\veps_k(b^{\infty}) = 0, \vphi_k(b^{\infty}) = 0, 0 \leq k \leq 5$, we have $\veps(b^{\infty}) = 0, \vphi(b^{\infty}) = 0$ and hence wt$(b^{\infty}) = 0$ which proves $(1)$.\\

Let $l \in \mZ_{>0}$ and $b^0 = (b^0_{ij})$ be an element of $(B^{5,l})_{\text{min}}$. Then there exist $a_k \in \mZ_{\geq 0}, 0 \leq k \leq 5$ such that $a_0+a_1+2a_2+2a_3+a_4+a_5=l$ and 
\begin{align*}
b^0_{11} &= a_1+a_2+a_3+a_5,  \ b^0_{12} =a_4, \ b^0_{13}  = a_3, \ b^0_{14} =a_2, \ b^0_{15}=a_0, \\
b^0_{22} &= a_2+a_3+a_5, \ b^0_{23} =a_4, \ b^0_{24} = a_3, \ b^0_{25} = a_1+a_2, \ b^0_{26} = a_0, \ b^0_{33} = a_3+a_5,\\
b^0_{34} &=a_4, \ b^0_{35}=a_2+a_3, \ b^0_{36} =a_1, \ b^0_{37} = a_0+a_2, \ b^0_{44} = a_5, \ b^0_{45} = a_3+a_4, \\ 
b^0_{46} &= a_2,\ b^0_{47} =a_1, \ b^0_{48} =a_0+a_2+a_3, \ b^0_{55} =a_5, \ b^0_{56} = a_3, \ b^0_{57} = a_2, \ b^0_{58} = a_1, \\ 
b^0_{59} &= a_0+a_2+a_3+a_4, \ {\text{and}}\\
\vphi(b^0)&= a_0\Lambda_0+a_1\Lambda_1+a_2\Lambda_2+a_3\Lambda_3+a_4\Lambda_4+a_5\Lambda_5,\\
\veps(b^0)& = a_5\Lambda_0+a_4\Lambda_1+a_3\Lambda_2+a_2\Lambda_3+a_0\Lambda_4+a_1\Lambda_5. 
\end{align*}

For any $b= (b_{ij}) \in B^{5,l}$ we define a map
$$f_{(l,b^0)} : T_{\veps(b^0)} \otimes B^{5,l} \otimes T_{-\vphi(b^0)} \longrightarrow B^{5,\infty}$$
by $f_{(l,b^0)}( t_{\veps(b^0)} \otimes b \otimes t_{-\vphi(b^0)} )= b' =(b'_{ij}) $
where $b'_{ij} = b_{ij} - b^0_{ij}$ for all $i \leq j \leq i+4,\ 1 \leq i \leq 5$.
Then it is easy to see that
\begin{align*}
\veps_k(b') &= \begin{cases}
\veps_k(b) - a_{5-k} = \veps_k(b) - \langle \check\alpha_k, \veps(b^0)\rangle, \text{for} \; k \neq 4,5\\
\veps_4(b) - a_0 = \veps_4(b) - \langle \check\alpha_4, \veps(b^0)\rangle,  \text{for} \; k = 4\\
\veps_5(b) - a_1 = \veps_5(b) - \langle \check\alpha_5, \veps(b^0)\rangle,  \text{for} \; k = 5\\
\end{cases}\\
\vphi_k(b') &= \vphi_k(b) - a_k = \vphi_k(b) + \langle \check\alpha_k, -\vphi(b^0)\rangle, \text{for} \; 0 \leq k  \leq5.
\end{align*}
Hence we have
\begin{align*}
\veps_k(b') &= \veps_k(b) - \langle \check\alpha_k, \veps(b^0)\rangle = \veps_k(t_{\veps(b^0)}\otimes b\otimes t_{-\vphi(b^0)}), \\
\vphi_k(b') &= \vphi_k(b) + \langle \check\alpha_k, -\vphi(b^0)\rangle=\vphi_k(t_{\veps(b^0)}\otimes b\otimes t_{-\vphi(b^0)}),\\
\text{wt}(b') &= \sum_{k=0}^5(\vphi_k(b') - \veps_k(b'))\L_k = \text{wt}(b) +  \sum_{k=0}^5 \langle \check\alpha_k, -\vphi(b^0)\rangle\L_k + \sum_{k=0}^5 \langle \check\alpha_k, \veps(b^0)\rangle\L_k \\
&= \text{wt}(b) + \veps(b^0) - \vphi(b^0) = \text{wt}(t_{\veps(b^0)}\otimes b\otimes t_{-\vphi(b^0)}).
\end{align*}

For $0 \leq k \leq 5, b \in B^{5,l}$, it can be checked easily that the conditions for the action of $\e_k$ on $b' = b - b^0$ hold if and only if the conditions for the action of $\e_k$ on $b$ hold. Hence from the defined action of $\e_k$, we see that $\e_k(b') = \e_k(b) - b^0, 0\leq k \leq5$. This implies that 
$$
f_{(l,b^0)} (\tilde{e_k}( t_{\veps(b^0)} \otimes b \otimes t_{-\vphi(b^0)})) = f_{(l,b^0)} ( t_{\veps(b^0)} \otimes \e_k(b) \otimes t_{-\vphi(b^0)})
$$
$$
= \e_k(b) - b^0 = \e_k(b') = \e_k ( f_{(l, b^0)}( t_{\veps(b^0)} \otimes b \otimes t_{-\vphi(b^0)})).
$$
Similarly, we have  $f_{(l,b^0)} (\tilde{f_k}( t_{\veps(b^0)} \otimes b \otimes t_{-\vphi(b^0)})) =  \f_k ( f_{(l, b^0)}( t_{\veps(b^0)} \otimes b \otimes t_{-\vphi(b^0)}))$. Clearly the map $f_{(l,b^0)}$ is injective with $f_{(l,b^0)} ( t_{\veps(b^0)}  \otimes b^0 \otimes t_{-\vphi(b^0)})=b^{\infty}$. This proves (2). \\

We observe that $ \sum_{j=i}^{i+4} b'_{ij} =  \sum_{j=i}^{i+4} b_{ij} -  \sum_{j=i}^{i+4} b^0_{ij} = l-l = 0$ for all $1 \leq i \leq 5$. Also,
\begin{align*}
b'_{11} &= b_{11} - b^0_{11} = b_{55} + b_{56} + b_{57} + b_{58} - a_1 - a_2 - a_3 -a_5 \\
	&= b'_{55} + b'_{56} + b'_{57} + b'_{58},\\
b'_{11} + b'_{12} &= b_{11} - b^0_{11} + b_{12} - b^0_{12} \\
	&= b_{44} + b_{45} + b_{46} + b_{47} - a_1 - a_2 - a_3 - a_4 - a_5 \\
	&= b'_{44} + b'_{45} + b'_{46} + b'_{47},\\
b'_{11} + b'_{12} + b'_{13} &= b_{11} - b^0_{11} + b_{12} - b^0_{12} + b_{13}  - b^0_{13} \\
	&= b_{33} + b_{34} + b_{35} + b_{36} - a_1 - a_2 - 2a_3 - a_4 - a_5 \\
	&= b'_{33} + b'_{34} + b'_{35} + b'_{36},\\
b'_{11} + b'_{12} + b'_{13} + b'_{14} &= b_{11} - b^0_{11}  + b_{12} - b^0_{12} + b_{13} - b^0_{13} + b_{14} - b^0_{14} \\
	&= b_{22} + b_{23} + b_{24} + b_{25} - a_1 - 2a_2 - 2a_3 - a_4 - a_5 \\
	&= b'_{22} + b'_{23} + b'_{24} + b'_{25}.
\end{align*}
Similarly, we can show that $\sum_{j=i}^{5-t} b'_{ij} = \sum_{j=i+t}^{4+t} b'_{i+t,j},$\ for $2 \leq i \leq 4,\ 1 \leq t \leq 4$. Hence we have $B^{5,\infty} \supseteq \cup_{(l,b) \in J}$ Im $f_{(l,b)}$. To prove (3) we also need to show that $B^{5,\infty} \subseteq \cup_{(l,b) \in J}$ Im $f_{(l,b)}$. Let $b' = (b'_{ij}) \in B^{5,\infty}$. By (2), we can assume that $b' \neq b^{\infty}$. Set 
\begin{align*}
a_1 	&=   \text{max} \{- b'_{11} + b'_{22} ,- b'_{11}- b'_{12}+b'_{22}+b'_{23},  - b'_{11}- b'_{12}- b'_{13}+b'_{22}+b'_{23}+b'_{24}, 0\},\\
a_2 	&= \text{max} \{- b'_{22}+b'_{33} ,  - b'_{22} - b'_{23}+b'_{33}+b'_{34}, - b'_{14}, -b'_{25} - a_1, 0\}, \\
a_3 	&= \text{max} \{- b'_{33}+b'_{44} , - b'_{13}, - b'_{24}, -b'_{35} - a_2, 0\}, \\
a_4 	&= \text{max} \{- b'_{12}, - b'_{23}, - b'_{34}, -b'_{45} - a_3, 0\},\\
a_5 	&= \text{max} \{ -b'_{11} - a_1 - a_2 - a_3, -b'_{22} - a_2 - a_3, -b'_{33} - a_3, -b'_{44}, 0 \}, \\
a_0 	&= \text{max} \{ b'_{11} - a_2 - a_3 - a_4, b'_{11}+b'_{12} - a_2 - a_3, b'_{11}+b'_{12}+b'_{13} - a_2, \\
	&\hspace{40pt} b'_{11}+b'_{12}+b'_{13}+b'_{14}, 0 \}.
\end{align*}
Let $l=a_0+a_1+2a_2+2a_3+a_4+a_5$. Let $b^0 = (b^0_{ij})$ where 
\begin{align*}
b^0_{11} &= a_1+a_2+a_3+a_5,  \ b^0_{12} =a_4, \ b^0_{13}  = a_3, \ b^0_{14} =a_2, \ b^0_{15}=a_0, \\
b^0_{22} &= a_2+a_3+a_5, \ b^0_{23} =a_4, \ b^0_{24} = a_3, \ b^0_{25} = a_1+a_2, \ b^0_{26} = a_0, \ b^0_{33} = a_3+a_5,\\
b^0_{34} &=a_4, \ b^0_{35}=a_2+a_3, \ b^0_{36} =a_1, \ b^0_{37} = a_0+a_2, \ b^0_{44} = a_5, \ b^0_{45} = a_3+a_4, \\ 
b^0_{46} &= a_2,\ b^0_{47} =a_1, \ b^0_{48} =a_0+a_2+a_3, \ b^0_{55} =a_5, \ b^0_{56} = a_3, \ b^0_{57} = a_2, \ b^0_{58} = a_1, \\ 
b^0_{59} &= a_0+a_2+a_3+a_4.
\end{align*}
Then $\vphi(b^0) = a_0\Lambda_0+a_1\Lambda_1+a_2\Lambda_2+a_3\Lambda_3+a_4\Lambda_4+a_5\Lambda_5$
and  $\veps(b^0) = a_5\Lambda_0+a_4\Lambda_1+a_3\Lambda_2+a_2\Lambda_3+a_0\Lambda_4+a_1\Lambda_5$.
It is easy to see that $b^0 \in (B^{5,l})_{\text{min}}$. 

Set $b = (b_{ij})$ where $b_{ij} = b'_{ij} +b^0_{ij}$. Then $\sum_{j=i}^{i+4} b_{ij} = \sum_{j=i}^{i+4} b'_{ij} + \sum_{j=i}^{i+4} b^0_{ij} = 0 + l = l,\ 1 \leq i \leq 5$ and we observe that 
\begin{align*}
b_{11} &= b'_{11} + b^0_{11} = b'_{11} + a_1+a_2+a_3+a_5 \geq 0, \ \text{since} \; a_5 \geq -b'_{11} - a_1 - a_2 - a_3,\\ 
b_{12} &= b'_{12} + b^0_{12} = b'_{12} + a_4 \geq 0, \ \text{since} \; a_4 \geq - b'_{12},\\ 
b_{13} &= b'_{13} + b^0_{13} = b'_{13} + a_3 \geq 0, \ \text{since} \; a_3 \geq - b'_{13},\\ 
b_{14} &= b'_{14} + b^0_{14} = b'_{14} + a_2 \geq 0, \ \text{since} \; a_2 \geq - b'_{14},\\ 
b_{15} &= b'_{15} + b^0_{15} = b'_{15} + a_0 = - b'_{11} - b'_{12} - b'_{13} - b'_{14} + a_0 \geq 0, \\
	& \hspace{4.5cm} \text{since} \; a_0 \geq b'_{11}+b'_{12}+b'_{13}+b'_{14}.
\end{align*}
Similarly, we can show that $b_{ij} \in \mZ_{\geq 0}$ for $i \leq j \leq i+4,\ 2 \leq i \leq 5$. We also have
\begin{align*}
b_{33} &= b'_{33} + b^0_{33} = b'_{55} + b'_{56} + a_3  + a_5 = b_{55} + b_{56},\\
b_{33} + b_{34} &= b'_{33} + b^0_{33} + b'_{34} + b^0_{34} = b'_{44} + b'_{45} +  a_3 + a_4 + a_5 = b_{44} + b_{45},\\
b_{44} &= b'_{44} + b^0_{44} = b'_{55} + a_5 = b_{55}.
\end{align*}
Similarly, we see that $\sum_{j=i}^{5-t} b_{ij} = \sum_{j=i+t}^{4+t} b_{i+t,j},\ 1 \leq i \leq 2, 1 \leq t \leq 4$. \ Also,
\begin{align*}
b_{11} &= b'_{11} + b^0_{11} \geq b'_{22} + b^0_{22} = b_{22},\ \text{since} \; b^0_{11} - b^0_{22} = a_1 \geq - b'_{11} + b'_{22},\\ 
b_{22} &= b'_{22} + b^0_{22} \geq b'_{33} + b^0_{33} = b_{33},\ \text{since} \; b^0_{22} - b^0_{33} = a_2 \geq - b'_{22} + b'_{33},\\
b_{33} &= b'_{33} + b^0_{33} \geq b'_{44} + b^0_{44} = b_{44},\ \text{since} \; b^0_{33} - b^0_{44} = a_3 \geq - b'_{33} + b'_{44} .
\end{align*}
Similarly, $\sum_{j=i}^{t} b_{ij} \geq \sum_{j=i+1}^{t+1} b_{i+1,j}, 1 \leq i < t \leq 4$. Hence $b \in B^{5,l}$.

Then $f_{(l,b^0)}(t_{\veps(b^0)} \otimes b \otimes t_{-\vphi(b^0)}) = b',$ and 
$b' \in \cup_{(l,b) \in J}$ Im $f_{(l,b)}$ which proves (3).
\end{proof}

\section{Ultra-discretization of \bf{$\mathcal{V}(D_5^{(1)})$}} 
It is known that the ultra-discretization of a positive geometric crystal is a Kashiwara crystal \cite{BK, N}. In this section we apply the ultra-discretization functor $\mathcal{UD}$ to the positive geometric crystal $\mathcal{V}=\mathcal{V}(D_5^{(1)})$ constructed in  Section 2. Then we show that as crystal it is isomorphic to the crystal $B^{5, \infty}$ given in the last section which proves the conjecture in \cite{KNO} for this case.
As a set $\mathcal{X}=\mathcal{UD}(\mathcal{V}) = \mZ^{10}$. We denote the variables $x_m^{(l)}$ in $\cV$ by the same notation $x_m^{(l)}$ in 
$\mathcal{UD}(\mathcal{V}) = \mathcal{X}$.\\

Let $x=(x_4^{(2)},x_3^{(3)}, x_2^{(2)}, x_5^{(2)}, x_3^{(2)}, x_4^{(1)}, x_1^{(1)}, x_2^{(1)},x_3^{(1)}, x_5^{(1)}) \in \mathcal{X}$. By applying the ultra-discretization functor $\mathcal{UD}$ to the positive geometric crystal $\cV$ in Section 2, we have for $0 \leq k \leq 5$:

\begin{align*}
\mathcal{UD}(\gamma_k)(x) &=
\begin{cases}
-x_2^{(2)}-x_2^{(1)}, &k=0,\\
2x_1^{(1)}-x_2^{(2)}-x_2^{(1)}, &k=1,\\
-x_1^{(1)}+2x_2^{(2)} -x_3^{(3)}+2x_2^{(1)}-x_3^{(2)}-x_3^{(1)}, &k=2,\\
-x_2^{(2)}+2x_3^{(3)}-x_4^{(2)}-x_2^{(1)}+2x_3^{(2)}-x_5^{(2)}+2x_3^{(1)}-x_4^{(1)}-x_5^{(1)}, &k=3,\\
-x_3^{(3)}+2x_4^{(2)} -x_3^{(2)}-x_3^{(1)}+2x_4^{(1)}, &k=4,\\
-x_3^{(3)}-x_3^{(2)}+2x_5^{(2)}-x_3^{(1)}+2x_5^{(1)}, &k=5.
\end{cases} \\
\mathcal{UD}(\veps_k)(x) &= 
\begin{cases}
\text{max} \{x_5^{(1)}, x_2^{(2)}-x_3^{(3)}+x_3^{(1)}, x_3^{(2)}-x_5^{(2)}+x_3^{(1)}, x_2^{(2)}-x_4^{(2)}+x_2^{(1)}, &\\
\hspace{15pt}x_2^{(2)}-x_3^{(3)}+x_2^{(1)}-x_3^{(2)}+x_4^{(1)}\}, &k=0,\\
-x_1^{(1)}+x_2^{(2)}, &k=1,\\
\text{max} \{-x_2^{(2)}+x_3^{(3)}, x_1^{(1)}-2x_2^{(2)}+x_3^{(3)}-x_2^{(1)}+x_3^{(2)}\}, &k=2,\\
\text{max} \{-x_3^{(3)}+x_4^{(2)}, x_2^{(2)}-2x_3^{(3)}+x_4^{(2)}-x_3^{(2)}+x_5^{(2)}, &\\
\hspace{15pt}x_2^{(2)}-2x_3^{(3)}+x_4^{(2)}+x_2^{(1)}-2x_3^{(2)}+x_5^{(2)}-x_3^{(1)}+x_4^{(1)}\}, &k=3,\\
\text{max} \{-x_4^{(2)}, x_3^{(3)}+x_3^{(2)}-2x_4^{(2)}-x_4^{(1)}\}, &k=4,\\
\text{max} \{x_3^{(3)}-x_5^{(2)}, x_3^{(3)}+x_3^{(2)}-2x_5^{(2)}+x_3^{(1)}-x_5^{(1)}\}, &k=5.
\end{cases}
\end{align*}
We define
\begin{align*}
\breve{c_2} &=\text{max}\{c+x_2^{(2)}+x_2^{(1)}, x_3^{(2)}+x_3^{(1)}\}-\text{max}\{x_2^{(2)}+x_2^{(1)}, x_3^{(2)}+x_3^{(1)}\},\\
\breve{c_{3_1}} &=\text{max}\{c+x_3^{(3)}+2x_3^{(2)}+x_3^{(1)}, x_2^{(2)}+x_5^{(2)}+x_3^{(2)}+x_3^{(1)},x_2^{(2)}+x_5^{(2)}+x_4^{(1)}+x_2^{(1)}\}-\\
			& \hspace{15pt} \text{max}\{x_3^{(3)}+2x_3^{(2)}+x_3^{(1)}, x_2^{(2)}+x_5^{(2)}+x_3^{(2)}+x_3^{(1)},x_2^{(2)}+x_5^{(2)}+x_4^{(1)}+x_2^{(1)}\},\\
\breve{c_{3_2}} &=\text{max}\{c+x_3^{(3)}+2x_3^{(2)}+x_3^{(1)}, c+x_2^{(2)}+x_5^{(2)}+x_3^{(2)}+x_3^{(1)},x_2^{(2)}+x_5^{(2)}+x_4^{(1)}+x_2^{(1)}\}-\\
			& \hspace{15pt} \text{max}\{c+x_3^{(3)}+2x_3^{(2)}+x_3^{(1)}, x_2^{(2)}+x_5^{(2)}+x_3^{(2)}+x_3^{(1)},x_2^{(2)}+x_5^{(2)}+x_4^{(1)}+x_2^{(1)}\},\\
\breve{c_4} &=\text{max}\{c+x_4^{(2)}+x_4^{(1)}, x_3^{(3)}+x_3^{(2)}\}-\text{max}\{x_4^{(2)}+x_4^{(1)}, x_3^{(3)}+x_3^{(2)}\},\\
\breve{c_5} &=\text{max}\{c+x_5^{(2)}+x_5^{(1)}, x_3^{(2)}+x_3^{(1)}\}-\text{max}\{x_5^{(2)}+x_5^{(1)}, x_3^{(2)}+x_3^{(1)}\},\\
\breve{B} &=x_3^{(1)}+\text{max}\{x_2^{(2)}-x_3^{(3)},x_3^{(2)}- x_5^{(2)}\},\\
\breve{C} &=x_2^{(2)}+x_2^{(1)}+\text{max}\{-x_4^{(2)},x_4^{(1)}-x_3^{(3)}- x_3^{(2)}\},\;\;
\breve{A} = \breve{B}+\breve{C}.
\end{align*}
Then we have
\begin{align*}
\mathcal{UD}(e_k^{c})(x) &=
\begin{cases}
(x_4^{(2)'},x_3^{(3)'}, x_2^{(2)}-c, x_5^{(2)'}, x_3^{(2)'}, x_4^{(1)'}, x_1^{(1)}-c, x_2^{(1)}-c,x_3^{(1)'}, x_5^{(1)'}), &k=0,\\
(x_4^{(2)},x_3^{(3)}, x_2^{(2)}, x_5^{(2)}, x_3^{(2)}, x_4^{(1)}, x_1^{(1)}+c, x_2^{(1)},x_3^{(1)}, x_5^{(1)}), &k=1,\\
(x_4^{(2)},x_3^{(3)}, x_2^{(2)}+\breve{c_2}, x_5^{(2)}, x_3^{(2)}, x_4^{(1)}, x_1^{(1)}, x_2^{(1)}+c-\breve{c_2},x_3^{(1)}, x_5^{(1)}), &k=2,\\
(x_4^{(2)},x_3^{(3)}+\breve{c_{3_1}}, x_2^{(2)}, x_5^{(2)}, x_3^{(2)}+\breve{c_{3_2}}, x_4^{(1)}, x_1^{(1)}, x_2^{(1)},&\\
\hspace{15pt}x_3^{(1)}+c-\breve{c_{3_1}}-\breve{c_{3_2}}, x_5^{(1)}), &k=3,\\
(x_4^{(2)}+\breve{c_4},x_3^{(3)}, x_2^{(2)}, x_5^{(2)}, x_3^{(2)}, x_4^{(1)}+c-\breve{c_4}, x_1^{(1)}, x_2^{(1)},x_3^{(1)}, x_5^{(1)}), &k=4,\\
(x_4^{(2)},x_3^{(3)}, x_2^{(2)}, x_5^{(2)}+\breve{c_5}, x_3^{(2)}, x_4^{(1)}, x_1^{(1)}, x_2^{(1)},x_3^{(1)}, x_5^{(1)}+c-\breve{c_5}), &k=5,
\end{cases} \\
\text{where} &\\
x_3^{(1)'} &=x_3^{(1)} + \text{max}\{x_5^{(1)},\breve{A}\}-\text{max}\{c+x_5^{(1)}, c+\breve{B}, \breve{C}\},\\
x_3^{(2)'} &=-c +\text{max}\{x_2^{(2)}+x_5^{(2)}-x_3^{(3)}, x_3^{(2)}\}+\text{max}\{c+x_5^{(1)}, c+\breve{B}, \breve{C}\}-\\
		& \hspace{15pt} \text{max}\{c+x_5^{(1)}, c+\breve{B}, \breve{C},x_2^{(2)}+x_5^{(2)}-x_3^{(3)}- x_3^{(2)}+ \text{max}\{c+x_5^{(1)}, \breve{A}\} \},\\
x_3^{(3)'} &=-c+ x_3^{(3)}+ x_3^{(2)}- \text{max}\{x_5^{(1)}+\breve{A}\}-\text{max}\{x_2^{(2)}+x_5^{(2)}-x_3^{(3)}, x_3^{(2)}\} +\\
		& \hspace{15pt} \text{max}\{c+x_5^{(1)}, c+\breve{B}, \breve{C},x_2^{(2)}+x_5^{(2)}-x_3^{(3)}- x_3^{(2)}+ \text{max}\{c+x_5^{(1)}, \breve{A}\}\}  ,\\
x_4^{(1)'} &=-c+x_4^{(1)}+\text{max}\{x_3^{(3)}+x_3^{(2)},x_4^{(2)}+x_4^{(1)}\}-\\
		& \hspace{15pt} \text{max}\{x_4^{(2)}+x_4^{(1)}, -c+x_3^{(3)}+x_3^{(2)}- \text{max}\{x_5^{(1)}+\breve{A}\}+ \text{max}\{c+x_5^{(1)}, c+\breve{B}, \breve{C}\}\}  ,\\
x_4^{(2)'} &=x_4^{(2)}-\text{max}\{x_3^{(3)}+x_3^{(2)},x_4^{(2)}+x_4^{(1)}\}+\\
		& \hspace{15pt} \text{max}\{x_4^{(2)}+x_4^{(1)}, -c+x_3^{(3)}+x_3^{(2)}- \text{max}\{x_5^{(1)}+\breve{A}\}+ \text{max}\{c+x_5^{(1)}, c+\breve{B}, \breve{C}\}\}  ,\\
x_5^{(1)'} &=x_5^{(1)}+\text{max}\{x_5^{(1)},\breve{A}\}-\text{max}\{c+x_5^{(1)},\breve{A}\},\\
x_5^{(2)'} &=-c+x_5^{(2)}-\text{max}\{x_5^{(1)},\breve{A}\}+\text{max}\{c+x_5^{(1)},\breve{A}\}.
\end{align*}
As shown in \cite{BK, N}, $\mathcal{X}$ with maps $\e_k, \f_k :\mathcal{X} \longrightarrow \mathcal{X}\cup \{0\}, \; \veps_k, \vphi_k : \mathcal{X} \longrightarrow \mZ, \; 0\leq k \leq 5$ and $\text{wt}: \mathcal{X} \longrightarrow P_{cl}$ is a Kashiwara crystal where for $x \in \mathcal{X}$
\begin{align*}
\e_k(x) &= \mathcal{UD}(e_k^c)(x)\arrowvert_{c=1}, \; \f_k(x) = \mathcal{UD}(e_k^c)(x)\arrowvert_{c=-1}, \\
\text{wt}(x) &= \sum_{k=0}^5\text{wt}_k(x)\L_k, \text{where} \; \text{wt}_k(x) = \mathcal{UD}(\gamma_k)(x), \\
\veps_k(x) &= \mathcal{UD}(\veps_k)(x), \; \vphi_k(x) = \text{wt}_k(x) + \veps_k(x).
\end{align*}

In particular, the explicit actions of $\f_k, 1\leq k \leq 5$ on $\mathcal{X}$ is given as follows.
\begin{align*}
\tilde{f_1}(x) & =(x_4^{(2)},x_3^{(3)}, x_2^{(2)}, x_5^{(2)}, x_3^{(2)}, x_4^{(1)}, x_1^{(1)}-1, x_2^{(1)},x_3^{(1)}, x_5^{(1)}),\\
\tilde{f_2}(x)  & =\begin{cases} (x_4^{(2)},x_3^{(3)}, x_2^{(2)}-1, x_5^{(2)}, x_3^{(2)}, x_4^{(1)}, x_1^{(1)}, x_2^{(1)},x_3^{(1)}, x_5^{(1)}) \\
\hspace{15pt}\text{if} \  x_2^{(2)}+x_2^{(1)} > x_1^{(1)}+x_3^{(2)},\\ 
(x_4^{(2)},x_3^{(3)}, x_2^{(2)}, x_5^{(2)}, x_3^{(2)}, x_4^{(1)}, x_1^{(1)}, x_2^{(1)}-1,x_3^{(1)}, x_5^{(1)}) \\
\hspace{15pt}\text{if} \  x_2^{(2)}+x_2^{(1)} \leq x_1^{(1)}+x_3^{(2)}, \end{cases}\\
\tilde{f_3}(x)  & =\begin{cases} (x_4^{(2)},x_3^{(3)}-1, x_2^{(2)}, x_5^{(2)}, x_3^{(2)}, x_4^{(1)}, x_1^{(1)}, x_2^{(1)},x_3^{(1)}, x_5^{(1)}) \\ 
\hspace{15pt}\text{if} \ x_3^{(3)}+x_3^{(2)} > x_2^{(2)}+x_5^{(2)},\ x_3^{(3)}+2x_3^{(2)}+x_3^{(1)} > x_2^{(2)}+x_2^{(1)}+x_5^{(2)}+x_4^{(1)}, \\ (x_4^{(2)},x_3^{(3)}, x_2^{(2)}, x_5^{(2)}, x_3^{(2)}-1, x_4^{(1)}, x_1^{(1)}, x_2^{(1)},x_3^{(1)}, x_5^{(1)}) \\
\hspace{15pt} \text{if} \ x_3^{(3)}+x_3^{(2)} \leq x_2^{(2)}+x_5^{(2)}, \ x_3^{(2)}+x_3^{(1)} > x_2^{(1)}+x_4^{(1)},\\ 
(x_4^{(2)},x_3^{(3)}, x_2^{(2)}, x_5^{(2)}, x_3^{(2)}, x_4^{(1)}, x_1^{(1)}, x_2^{(1)},x_3^{(1)}-1, x_5^{(1)}) \\
\hspace{15pt} \text{if} \ x_3^{(2)}+x_3^{(1)} \leq x_2^{(1)}+x_4^{(1)},  \ x_3^{(3)}+2x_3^{(2)}+x_3^{(1)} \leq x_2^{(2)}+x_2^{(1)}+x_5^{(2)}+x_4^{(1)}, \end{cases}\\
\tilde{f_4}(x)  & =\begin{cases} (x_4^{(2)}-1,x_3^{(3)}, x_2^{(2)}, x_5^{(2)}, x_3^{(2)}, x_4^{(1)}, x_1^{(1)}, x_2^{(1)},x_3^{(1)}, x_5^{(1)}) \\
\hspace{15pt}\text{if} \  x_4^{(2)}+x_4^{(1)}> x_3^{(3)}+x_3^{(2)}\\ 
(x_4^{(2)},x_3^{(3)}, x_2^{(2)}, x_5^{(2)}, x_3^{(2)}, x_4^{(1)}-1, x_1^{(1)}, x_2^{(1)},x_3^{(1)}, x_5^{(1)}) \\
\hspace{15pt}\text{if} \  x_4^{(2)}+x_4^{(1)} \leq x_3^{(3)}+x_3^{(2)},  \end{cases}\\
\tilde{f_5}(x) & =\begin{cases} (x_4^{(2)},x_3^{(3)}, x_2^{(2)}, x_5^{(2)}-1, x_3^{(2)}, x_4^{(1)}, x_1^{(1)}, x_2^{(1)},x_3^{(1)}, x_5^{(1)}) \\
\hspace{15pt}\text{if} \  x_5^{(2)}+x_5^{(1)} > x_3^{(2)}+x_3^{(1)},\\ 
(x_4^{(2)},x_3^{(3)}, x_2^{(2)}, x_5^{(2)}, x_3^{(2)}, x_4^{(1)}, x_1^{(1)}, x_2^{(1)},x_3^{(1)}, x_5^{(1)}-1) \\
\hspace{15pt}\text{if} \  x_5^{(2)}+x_5^{(1)} \leq x_3^{(2)}+x_3^{(1)}. \end{cases}
\end{align*}
To determine the explicit action of $\tilde{f_0}(x)$ we define conditions $(\breve{F1})-(\breve{F5})$ as follows.\\

\begin{align*}
(\breve{F1}) 	& \hspace{5pt}x_2^{(2)}+x_2^{(1)} \geq x_4^{(2)}+x_5^{(1)}, x_3^{(3)}+x_3^{(2)} \geq x_4^{(2)}+x_4^{(1)}, \\
		& \hspace{15pt}x_3^{(3)}+x_2^{(1)} \geq x_4^{(2)}+x_3^{(1)}+(-x_2^{(2)}+x_3^{(3)}+ x_3^{(2)}-x_5^{(2)})_{+}, \\
(\breve{F2}) 	&\hspace{5pt}x_2^{(2)}+x_2^{(1)} +x_4^{(1)} \geq x_3^{(3)}+x_3^{(2)}+x_5^{(1)}, x_4^{(2)}+x_4^{(1)} > x_3^{(3)}+x_3^{(2)},\\
		& \hspace{15pt}x_2^{(1)}+x_4^{(1)} \geq x_3^{(2)}+x_3^{(1)}+(-x_2^{(2)}+x_3^{(3)}+ x_3^{(2)}-x_5^{(2)})_{+}, \\
(\breve{F3}) 	&\hspace{5pt}x_2^{(2)}+x_3^{(1)} \geq x_3^{(3)}+x_5^{(1)}, x_2^{(2)}+x_5^{(2)} \geq x_3^{(3)}+x_3^{(2)},\\
		& \hspace{15pt}x_4^{(2)}+x_3^{(1)} >x_3^{(3)}+x_2^{(1)}+(-x_3^{(3)}+x_4^{(2)}- x_3^{(2)}+x_4^{(1)})_{+}, \\
(\breve{F4}) 	&\hspace{5pt}x_3^{(2)}+x_3^{(1)} \geq x_5^{(2)}+x_5^{(1)}, x_3^{(3)}+x_3^{(2)} \geq x_2^{(2)}+x_5^{(2)},\\
		& \hspace{15pt}x_4^{(2)}+x_3^{(2)}+x_3^{(1)} > x_2^{(2)}+x_2^{(1)}+b_{24}+(-x_3^{(3)}+x_4^{(2)}- x_3^{(2)}+x_4^{(1)})_{+}, \\
(\breve{F5}) 	&\hspace{5pt}x_3^{(3)}+x_5^{(1)}> x_2^{(2)}+x_3^{(1)}+(-x_2^{(2)}+x_3^{(3)}+ x_3^{(2)}-x_5^{(2)})_{+}, \\
		& \hspace{15pt} x_4^{(2)}+x_5^{(1)}> x_2^{(2)}+x_2^{(1)}+(-x_3^{(3)}+x_4^{(2)}- x_3^{(2)}+x_4^{(1)})_{+}.\\
\end{align*}

Then for $x \in \mathcal{X}$ we have $\f_0(x) = \mathcal{UD}(e_0^c)(x)\arrowvert_{c=-1}$ given by

\begin{align*}
\tilde{f_0}(x) = \begin{cases} 
&(x_4^{(2)}+1,x_3^{(3)}+1, x_2^{(2)}+1, x_5^{(2)}+1, x_3^{(2)}+1, x_4^{(1)}, x_1^{(1)}+1, x_2^{(1)}+1,x_3^{(1)}, x_5^{(1)}) \\ & \hspace{15pt}\text{if} \ (\breve{F1}),  \vspace{1pt}\\ 
&(x_4^{(2)},x_3^{(3)}+1, x_2^{(2)}+1, x_5^{(2)}+1, x_3^{(2)}+1, x_4^{(1)}+1, x_1^{(1)}+1, x_2^{(1)}+1,x_3^{(1)}, x_5^{(1)}) \\ & \hspace{15pt}\text{if} \ (\breve{F2}),  \vspace{1pt}\\ 
&(x_4^{(2)},x_3^{(3)}+1, x_2^{(2)}+1, x_5^{(2)}+1, x_3^{(2)}, x_4^{(1)}+1, x_1^{(1)}+1, x_2^{(1)}+1,x_3^{(1)}+1, x_5^{(1)}) \\ & \hspace{15pt}\text{if} \ (\breve{F3}),  \vspace{1pt}\\ 
&(x_4^{(2)},x_3^{(3)}, x_2^{(2)}+1, x_5^{(2)}+1, x_3^{(2)}+1, x_4^{(1)}+1, x_1^{(1)}+1, x_2^{(1)}+1,x_3^{(1)}+1, x_5^{(1)}) \\ & \hspace{15pt}\text{if} \ (\breve{F4}),  \vspace{1pt}\\ 
&(x_4^{(2)},x_3^{(3)}, x_2^{(2)}+1, x_5^{(2)}, x_3^{(2)}+1, x_4^{(1)}+1, x_1^{(1)}+1, x_2^{(1)}+1,x_3^{(1)}+1, x_5^{(1)}+1) \\ & \hspace{15pt}\text{if} \ (\breve{F5}).   
 \end{cases}\\
\end{align*}

\begin{theorem} The map
\begin{displaymath}
\begin{array}{lccc}
\Omega : & B^{5,\infty} & \rightarrow & \mathcal{X},\\
&b=(b_{ij})_{i \leq j \leq i+4,\ 1 \leq i \leq 5} &\mapsto &x=(x_4^{(2)},x_3^{(3)}, x_2^{(2)}, x_5^{(2)}, x_3^{(2)}, x_4^{(1)}, x_1^{(1)}, x_2^{(1)},x_3^{(1)}, x_5^{(1)})
\end{array}
\end{displaymath}
defined by 
\begin{align*}
x_m^{(l)} &= \begin{cases}
\sum_{j=m-l+1}^m b_{m-l+1, j}, \ \ \text{for} \; \ \ m= 1, 2, 3\\
\sum_{j=m-2l+1}^m b_{m-2l+1, j}, \ \ \text{for} \; \ \ m= 4 \\
\sum_{j=m-2l+1}^{m-1} b_{m-2l+1, j}, \ \ \text{for} \; \ \ m= 5.
\end{cases}
\end{align*}
is an isomorphism of crystals.
\end{theorem}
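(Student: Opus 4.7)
The plan is to establish three things in turn: that $\Omega$ is a bijection of sets, that $\Omega$ preserves the weight, $\varepsilon_k$, and $\varphi_k$ functions, and that $\Omega$ intertwines each crystal operator $\tilde{e}_k$ and $\tilde{f}_k$ for $k \in I$. For bijectivity I would construct the inverse explicitly. Using the defining relations $\sum_{j=i}^{i+4} b_{ij} = 0$ for $1 \leq i \leq 5$ and $\sum_{j=i}^{5-t} b_{ij} = \sum_{j=i+t}^{4+t} b_{i+t,j}$ for $1 \leq i,t \leq 4$, each $b_{ij}$ can be written as a $\mathbb{Z}$-linear combination of the ten $x_m^{(l)}$: for example $b_{11}=x_1^{(1)}$, $b_{12}=x_2^{(2)}-x_1^{(1)}$, $b_{13}=x_3^{(3)}-x_2^{(2)}$, $b_{14}=x_4^{(2)}-x_3^{(3)}$, $b_{15}=-x_4^{(2)}$, and the remaining $b_{ij}$ are read off similarly, after which one verifies that the inverse assignment lands in $B^{5,\infty}$.

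With this dictionary $b_{ij} \leftrightarrow x_m^{(l)}$ in hand, the equalities $\mathrm{wt}_k(b) = \mathcal{UD}(\gamma_k)(\Omega(b))$ and $\varepsilon_k(b) = \mathcal{UD}(\varepsilon_k)(\Omega(b))$ reduce to direct substitution, handled case-by-case on the max expressions appearing for $k \in \{0, 4\}$; the identity $\varphi_k = \mathrm{wt}_k + \varepsilon_k$ then handles $\varphi_k$ automatically.

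The intertwining $\Omega \circ \tilde{e}_k = \tilde{e}_k \circ \Omega$ (and likewise for $\tilde{f}_k$) is verified separately for each $k$. For $k=1$, the updates $b_{11} \mapsto b_{11} \mp 1$, $b_{12} \mapsto b_{12} \pm 1$, $b_{58} \mapsto b_{58} \mp 1$, $b_{59} \mapsto b_{59} \pm 1$ translate to the single shift $x_1^{(1)} \mapsto x_1^{(1)} \mp 1$, since $b_{58}$ and $b_{59}$ do not appear in any $x_m^{(l)}$ and the cancellations in $x_2^{(2)}, x_3^{(3)}, x_4^{(2)}$ are immediate. For $k \in \{2, 3, 4, 5\}$, each branching inequality such as $b_{12} > b_{23}$ becomes a corresponding inequality in the $x$'s (here $x_2^{(2)} + x_2^{(1)} > x_1^{(1)} + x_3^{(2)}$), matching the case-split in $\mathcal{X}$; the updates on the distinguished $b_{ij}$'s then reduce to the listed unit shifts on a single $x_m^{(l)}$.

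The main obstacle is $k=0$, where both sides involve a five-fold split: the conditions $(F_1)$--$(F_5)$ on $B^{5,\infty}$ must correspond exactly to $(\breve{F_1})$--$(\breve{F_5})$ on $\mathcal{X}$ under the dictionary, and within each branch the up-to-fourteen $b_{ij}$ increments must collapse to the up-to-seven $x_m^{(l)}$ shifts listed for $\tilde{f}_0$ on $\mathcal{X}$. A representative check for the first branch confirms the pattern: substituting $b_{22} = x_2^{(1)}$, $b_{13}+b_{14}+b_{44} = x_4^{(2)} - x_2^{(2)} + x_5^{(1)}$, and $b_{13}-b_{24} = x_3^{(3)} + x_3^{(2)} - x_2^{(2)} - x_5^{(2)}$ turns each inequality of $(F_1)$ into the corresponding inequality of $(\breve{F_1})$, and the updates cancel in pairs on every $x$-coordinate except those stated. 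The remaining four branches proceed analogously by careful but direct computation, and by the symmetric treatment for $\tilde{e}_0$ the intertwining is complete.
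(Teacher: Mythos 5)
Your proposal is correct and follows essentially the same route as the paper's proof: exhibit the explicit inverse $\Omega^{-1}$ (with the same formulas $b_{11}=x_1^{(1)}$, $b_{12}=x_2^{(2)}-x_1^{(1)}$, etc.), verify $\mathrm{wt}_k$ and $\veps_k$ by direct substitution and deduce $\vphi_k$ from $\vphi_k=\mathrm{wt}_k+\veps_k$, and check the intertwining of $\e_k,\f_k$ case by case by matching the branching conditions under the dictionary. Your representative verification of the $(F_1)\leftrightarrow(\breve{F1})$ correspondence for $k=0$ is accurate and in fact slightly more explicit than the paper, which only spells out the $k=2$ case.
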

\begin{proof} First we observe that the map $\Omega^{-1} : \mathcal{X} \rightarrow B^{5, \infty}$ is given by $\Omega^{-1}(x)=b$ where
\begin{align*}
b_{11} &= x_1^{(1)},  \ b_{12} =x_2^{(2)}-x_1^{(1)}, \ b_{13}  = x_3^{(3)}-x_2^{(2)}, \ b_{14} =x_4^{(2)}-x_3^{(3)}, \ b_{15}=-x_4^{(2)}, \\
b_{22} &= x_2^{(1)}, \ b_{23} =x_3^{(2)}-x_2^{(1)}, \ b_{24} = x_5^{(2)}-x_3^{(2)}, \ b_{25} = x_4^{(2)}-x_5^{(2)}, \ b_{26} = -x_4^{(2)}, \\ 
b_{33} &= x_3^{(1)}, \ b_{34} =x_4^{(1)}-x_3^{(1)}, \ b_{35}=x_5^{(2)}-x_4^{(1)}, \ b_{36} =x_3^{(3)}-x_5^{(2)}, \ b_{37} = -x_3^{(3)}, \\ 
b_{44} &= x_5^{(1)}, \ b_{45} = x_4^{(1)}-x_5^{(1)}, \ b_{46} = x_3^{(2)}-x_4^{(1)},\ b_{47} =x_2^{(2)}-x_3^{(2)}, \ b_{48} =-x_2^{(2)}, \\ 
b_{55} &= x_5^{(1)}, \ b_{56} = x_3^{(1)}-x_5^{(1)}, \ b_{57} = x_2^{(1)}-x_3^{(1)}, \ b_{58} = x_1^{(1)}-x_2^{(1)}, \ b_{59} = -x_1^{(1)}.
\end{align*}
Hence the map $\Omega$ is bijective.  To prove that $\Omega$ is an isomorphism of crystals we need to show 
that for $b \in B^{5,\infty}$ and $0 \leq k \leq 5$ we have:
\begin{align*}
\Omega(\tilde{f_k} (b)) 	&= \tilde{f_k} (\Omega(b)),\\
\Omega(\tilde{e_k} (b)) 	&= \tilde{e_k} (\Omega(b)),\\
\text{wt}_k (\Omega(b))	&= \text{wt}_k(b),\\
\veps_k (\Omega(b))		&= \veps_k(b).
\end{align*}
Hence $\vphi_k(\Omega(b)) = \text{wt}_k (\Omega(b)) + \veps_k (\Omega(b)) = \text{wt}_k(b) + \veps_k(b) = \vphi_k(b)$.	We observe that the conditions for the action of $\f_k$ on $\Omega(b)$ in $\mathcal{X}$ hold if and only if the corresponding conditions for the action of $\f_k$  on $b$ in $B^{5, \infty}$ hold for all $0\leq k \leq 5$. Suppose $\Omega(b) = x$ and $x_2^{(2)} + x_2^{(1)} > x_1^{(1)} + x_3^{(2)}$, then
$b_{11} + b_{12} + b_{22} > b_{11} + b_{22} + b_{23}$ and $\f_2(x) = (x_4^{(2)}, x_3^{(3)}, x_2^{(2)} - 1, x_5^{(2)}, x_3^{(2)}, x_4^{(1)},x_1^{(1)},x_2^{(1)},x_3^{(1)},x_5^{(1)}) = \Omega(\f_2(b))$. Similarly, we can show $\tilde{f_k} (\Omega(b)) = \tilde{f_k} (\Omega(b))$  and $\tilde{e_k} (\Omega(b)) = \tilde{e_k} (\Omega(b))$ for $k=0,1,3,4,5$. We also have
$\text{wt}_0(\Omega(b)) = \text{wt}_0(x) = -x_2^{(2)} - x_2^{(1)} = - b_{11} - b_{12} - b_{22} = - b_{11} - b_{12} + b_{23} + b_{24} + b_{25} + b_{26} = \text{wt}_0(b)$ for all $b \in B^{5,\infty}$. Similarly, $\text{wt}_k (\Omega(b)) = \text{wt}_k(b)$ for $1 \leq k \leq 5$. Also, $\veps_5 (\Omega(b)) = \veps_5(x) = \text{max} \{x_3^{(3)}-x_5^{(2)}, x_3^{(3)}+x_3^{(2)}-2x_5^{(2)}+x_3^{(1)}-x_5^{(1)}\} = \text{max} \{b_{11}+b_{12}+b_{13}-b_{22}-b_{23}-b_{24}, b_{11}+b_{12}+b_{13}-b_{22}-b_{23}-2b_{24}+b_{33}-b_{44}\}= \veps_5(b)$. Similarly, 
$\veps_k (\Omega(b)) = \veps_k(b)$ for $0 \leq k \leq 4$ which completes the proof.
\end{proof}

\bibliographystyle{amsalpha}

\end{document}